\documentclass[12pt]{amsart}
\usepackage[utf8]{inputenc}
\usepackage{amsfonts,amsmath,amssymb,amsxtra,stmaryrd,mathdots, amsthm}

\usepackage{a4wide}

\usepackage{colonequals}
\usepackage[utf8]{inputenc}
\usepackage[british]{babel}
\usepackage{hyperref}
\hypersetup{colorlinks=true,urlcolor=blue,citecolor=blue,linkcolor=blue}

\usepackage[all,cmtip]{xy}
\usepackage{float}

\usepackage[dvipsnames]{xcolor}
\usepackage[normalem]{ulem}
\usepackage{cleveref}
\Crefname{equation}{}{}

\usepackage{tikz}
\usepackage{tikz-cd}

\usepackage{mathtools}
\usepackage{amsmath}
\usepackage{thmtools}
\usepackage{thm-restate}

\usepackage{tikz-cd}
\usetikzlibrary{patterns,shapes,cd,arrows}
\usepackage[auto]{contour}
\contourlength{2pt}

\usepackage{makecell}

\usepackage{thm-restate}

\usepackage{listings}
\usepackage{xcolor}

\definecolor{codegreen}{rgb}{0,0.6,0}
\definecolor{codegray}{rgb}{0.5,0.5,0.5}
\definecolor{codepurple}{rgb}{0.58,0,0.82}
\definecolor{backcolour}{rgb}{0.95,0.95,0.92}

\lstdefinestyle{mystyle}{
    backgroundcolor=\color{backcolour},   
    commentstyle=\color{codegreen},
    keywordstyle=\color{magenta},
    numberstyle=\tiny\color{codegray},
    stringstyle=\color{codepurple},
    basicstyle=\ttfamily\footnotesize,
    breakatwhitespace=false,         
    breaklines=true,                 
    captionpos=b,                    
    keepspaces=true,                 
    numbers=left,                    
    numbersep=5pt,                  
    showspaces=false,                
    showstringspaces=false,
    showtabs=false,                  
    tabsize=2
}

\lstset{style=mystyle}

\usepackage{enumitem}%

\newcommand{\Q}{\mathbb{Q}}

\newcommand{\C}{\mathbb{C}}

\newcommand{\Z}{\mathbb{Z}}

\DeclareMathOperator{\End}{\operatorname{End}}

\DeclareMathOperator{\GL}{GL}

\DeclareMathOperator{\Hom}{Hom}
\DeclareMathOperator{\Gal}{Gal}

\DeclareMathOperator{\MT}{MT}

\DeclareMathOperator{\Spec}{Spec}

\DeclareMathOperator{\Ind}{Ind}
\DeclareMathOperator{\Res}{Res}
\DeclareMathOperator{\HH}{H}

\DeclareMathOperator{\proj}{proj}

\newcommand{\dR}{\mathrm{dR}}

\usepackage{mathrsfs}

\numberwithin{equation}{section}

\newtheorem{theorem}[equation]{Theorem}

\newtheorem{corollary}[equation]{Corollary}

\newtheorem{lemma}[equation]{Lemma}

\newtheorem{proposition}[equation]{Proposition}

\theoremstyle{definition}
\newtheorem{definition}[equation]{Definition}

\theoremstyle{remark}
\newtheorem{remark}[equation]{Remark}

\newtheorem{example}[equation]{Example}

\setcounter{tocdepth}{1}

\newcommand{\Gl}{\mathcal{G}_\ell}

\DeclareMathOperator{\etale}{\text{ét}}

\DeclareMathOperator{\AH}{AH}

\newcommand{\defeq}{\vcentcolon=}

\usepackage{wrapfig} \title[Connected monodromy fields of Jacobians with complex multiplication]{Connected monodromy fields of Jacobians\\with complex multiplication}

\author{Andrea Gallese and Davide Lombardo}

\begin{document}

\begin{abstract}
    We describe an algorithm to compute the minimal field of definition of the Tate classes on powers of a Jacobian $J$ with potential complex multiplication. This field arises as a natural invariant of the Galois representations attached to $J$. We also give closed formulas expressing the periods of anti-holomorphic differential forms on $J$ in terms of the periods of the holomorphic ones.
\end{abstract}

\maketitle

\section{Introduction}
Let $A$ be an abelian variety defined over a number field $k$. 
The \emph{Galois representation} attached to $A$ relative to a prime $\ell$ is the homomorphism
\[
    \rho_{A,\ell} \colon \Gal(\bar k /k )\to \GL(V_\ell)
\]
arising from the natural Galois action on the Tate module $V_\ell = \varprojlim_n A[\ell^n] \otimes_{\Z_\ell}\Q_\ell$.
Galois representations of this form play a central role in arithmetic geometry, and many deep results in number theory have been obtained by reducing questions to the study of such representations; see, for example, the surveys~\cite{MR2060030, MR4871889}. %

The \emph{$\ell$-adic monodromy group} of $A$, denoted by $\Gl$, is defined as the Zariski closure inside $\GL_{V_\ell}$ of the image of $\rho_{A,\ell}$. It is a reductive linear algebraic group over $\mathbb{Q}_\ell$, and a problem of fundamental interest is to determine its isomorphism class. In this paper, we solve this problem under the assumption that $A$ has complex multiplication.

This problem naturally decomposes into three tasks: describing the identity component $\Gl^0$, understanding the finite component group $\Gl/\Gl^0$, and describing the class of their extension. Most of our work concerns the determination of the component group. More generally, we study the homomorphism
\[
    \varepsilon_{A, \ell}\colon \Gal(\bar{k}/k) \longrightarrow \Gl(\mathbb{Q}_\ell) \longrightarrow (\Gl/\Gl^0)(\mathbb{Q}_\ell).
\]
The component group is finite, and each of its elements admits a $\mathbb{Q}_\ell$-rational representative. The projection $\varepsilon_{A,\ell}$ is surjective, and therefore identifies $\Gl/\Gl^0$ with the Galois group of a finite extension $k(\varepsilon_A)/k$. A theorem of Serre~\cite[n.~133]{serre-IV} shows that this extension is independent of $\ell$. We refer to $k(\varepsilon_A)$ as the \emph{connected monodromy field} of $A$.  
The connected monodromy field admits several equivalent characterizations:
\begin{itemize}
    \item It is the smallest extension $L/k$ such that the monodromy group of $A_L$ is connected~\cite[Proposition 2.3]{zywina2019effective}.
    \item It is the field of definition of all Tate classes on powers $A^r$~\cite[Proposition 2.4.3]{cantoralfarfan2023monodromy}.
    \item It %
    is the intersection of the $\ell^\infty$-torsion fields of $A$~\cite[Theorem 0.1]{MR1441234}:
    \begin{equation}\label{eq: characterization 3}
        \textstyle k(\varepsilon_A) = \bigcap_{\ell \text{ prime}}k(A[\ell^\infty]).
    \end{equation}
\end{itemize}

For many abelian varieties, the connected monodromy field coincides with the field of definition of the endomorphisms of $A$ \cite{MR3320526}, but at present, there is no general algorithm to compute the connected monodromy field of an arbitrary abelian variety. Existing approaches are typically ad hoc. In some cases, the field can be determined only after extending the base field $k$~\cite{MR1630512}; in others, one restricts to curves with explicitly described algebraic cycles~\cite{cantoralfarfan2023monodromy}, or to special families such as Fermat Jacobians~\cite{gallese_part1}. By contrast, in this paper we present an algorithm that computes the connected monodromy field for any CM Jacobian.

\subsection*{Main result}
When $A$ has complex multiplication, an explicit description of the connected component of $\Gl$ is available. Indeed, the Mumford--Tate conjecture is known to hold for $A$~\cite{MTconjMC}, implying that $\Gl^0 \simeq \MT(A)_{\mathbb{Q}_\ell}$, and the Mumford--Tate group can be computed explicitly~\cite{LombardoMT}. Our main result, \Cref{theorem: computation of kconn} below, gives an additional characterization of the connected monodromy field.  

Let $E$ denote a maximal CM subalgebra of $\End(A_{\bar k})$, and let $F/\mathbb{Q}$ be the normal closure of the compositum of the components of $E$.  
Exploiting the theory of complex multiplication, we compute algebraic differential $1$-forms $\omega_1,\dots,\omega_{2g}$ whose classes form a basis of $\HH^1_{\dR}(A/kF)$ and such that:
\begin{itemize}
    \item $\omega_1, \, \dots, \,\omega_g$ are regular differential forms giving a basis of $\HH^0(A_{kF}, \Omega^1)$;
    \item $\omega_1,\, \ldots,\, \omega_{2g}$ form an eigenbasis for the $E$-action: there are $2g$ characters $\chi_i\colon E \to \C$ with
    $\alpha^\ast(\omega_i) = \chi_i(\alpha)\cdot\omega_i$, 
    as we explain in \Cref{section: compute the CM type};
    \item the $\Gal(F/k)$-action on $\omega_1,\, \ldots,\, \omega_{2g}$ is a permutation representation, that is, for each $\tau \in \Gal(F/k)$ there is a permutation of $\{1, \ldots, 2g\}$, that we still denote by $\tau$, such that
    $\tau(\omega_i)= \omega_{\tau(i)}$ for all $i$ (see \Cref{lemma: rescaling of the eigenbasis}). The Galois action on the index set $\{1, \dots, 2g\}$ corresponds to its action on the characters $\{\chi_1, \dots, \chi_{2g}\}$, so that $\omega_{\tau (i)}$ is the unique $\tau\chi_i$-eigenform.
\end{itemize}

In this basis, the Mumford--Tate group $\MT(A)$ is diagonal. Writing $x_i$ for the diagonal entries, one may find a finite generating set $\mathcal{F}$ of equations for $\MT(A)$ of the form $f=1$, where $f=\prod_i x_i^{e_i}$ is a Laurent monomial. In \Cref{definition: differential forms line decomposition}, we associate with any $f\in \mathcal{F}$ with $\sum_{i} |e_i| = 2n$ a differential form $\omega_f \in \HH_{\dR}^1(A/kF)^{\otimes 2n}$ which is an eigenvector for the action of $E^{\otimes 2n}$ with character $\bigotimes_i \chi_i^{\otimes e_i}$.
Let $\tau f$ denote the Laurent monomial $\prod_i x_{\tau(i)}^{e_i}$.

We now show how to characterize the connected monodromy field in terms of \textit{periods}, which we now introduce.
\begin{definition}
    \label{definition: periods}
    
    For an abelian variety $A$, a cycle $\sigma\in \HH_{2n}^B(A(\C), \Q)$, and a differential $2n$-form $\omega$ on $A(\C)$, define the period
    \[
        P(\sigma, \omega) \defeq \frac{1}{(2\pi i)^n} \int_\sigma \omega.
    \]
\end{definition}

Periods naturally arise when comparing the singular and de Rham cohomology of a variety. Grothendieck’s period conjecture \cite[Section 7.5]{MR2115000} predicts that, in general, the algebraic relations among such integrals are precisely those explained by the motivic structure of the variety. For CM abelian varieties, Deligne \cite{Deligne} showed the algebraicity of suitable periods, making them particularly amenable to computation, and it is this description that we exploit in our work. %
More precisely, fix an equation $f \in \mathcal{F}$ and a topological cycle $\lambda$ that generates $\HH_1^B(A, \Q)$ as a free $E$-module. Let $\sigma \in \HH^B_{2n}(A^{2n}(\C), \Q)$ be the class corresponding to $\lambda^{\otimes 2n} $ via the {K\"unneth} product formula.
In \Cref{proposition: CI is Galois stable iff period condition}, we prove that the period $P(\sigma, \omega_f)$ is algebraic. The periods $P(\sigma, \omega_f)$ for $f$ ranging over a set of defining equations of $\MT(A)$ can be used to describe $k(\varepsilon_A)$, as made precise in the following statement.

\begin{theorem}
{\label{theorem: computation of kconn}}
    Let $A/k$ be an abelian variety with complex multiplication.
    Let $\mathcal{F}$ be a finite generating set of equations for $\MT(A)$.
    The extension $k(\varepsilon_A)/k(\End A)$ is abelian and corresponds to the subgroup of $\tau \in \Gal(\bar k / k(\End A))$ such that
    \[ \tau P(\sigma, \omega_f) = P(\sigma, \omega_{\tau f}) \quad \forall f \in \mathcal{F}. \]
\end{theorem}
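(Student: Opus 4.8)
The plan is to identify $k(\varepsilon_A)$ with the fixed field of a concrete subgroup of $\Gal(\bar k / k(\End A))$ cut out by the period conditions, using the characterization of $k(\varepsilon_A)$ as the field of definition of all Tate classes on powers $A^r$. Since $\MT(A)$ is diagonal in the chosen eigenbasis $\omega_1, \dots, \omega_{2g}$, the Tate (equivalently, Hodge, by the Mumford--Tate conjecture for CM abelian varieties) classes on $A^{2n}$ are spanned, after passing to a suitable Tate twist, by the monomial differential forms $\omega_f$ attached to the defining Laurent monomials $f \in \mathcal{F}$: indeed the line $\C\cdot\omega_f$ inside $\HH^1_{\dR}(A/kF)^{\otimes 2n}$ is $\MT(A)$-invariant precisely because $f = 1$ holds on $\MT(A)$. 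Thus a Galois element $\tau$ fixes the motive governing $\omega_f$ if and only if it preserves the class of $\omega_f$ up to the usual scaling, and the content of the theorem is to translate "preserves the class of $\omega_f$" into the numerical identity $\tau P(\sigma, \omega_f) = P(\sigma, \omega_{\tau f})$.

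First I would recall from \Cref{lemma: rescaling of the eigenbasis} that over $F$ the Galois group $\Gal(F/k)$ permutes the $\omega_i$, hence $\tau \omega_f = \omega_{\tau f}$ as de Rham classes (this is where we need the eigenbasis to be a genuine permutation representation, not merely a projective one — that normalization is exactly what \Cref{lemma: rescaling of the eigenbasis} buys us). Next, for $\tau \in \Gal(\bar k / kF)$ one argues that $\tau$ fixes the Tate class attached to $f$ if and only if $\tau$ acts trivially on the corresponding one-dimensional piece of étale cohomology; by the comparison isomorphism and \Cref{proposition: CI is Galois stable iff period condition}, this triviality is detected by the algebraic period $P(\sigma, \omega_f)$, so the condition becomes $\tau P(\sigma, \omega_f) = P(\sigma, \omega_f)$. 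For a general $\tau \in \Gal(\bar k / k(\End A))$ one combines the two computations: writing the action of $\tau$ on the de Rham side as the permutation $f \mapsto \tau f$ on monomials and on the Betti side as multiplication by the Galois conjugate of the period, the class $\omega_f$ descends to $k(\varepsilon_A)$ exactly when $\tau P(\sigma, \omega_f) = P(\sigma, \omega_{\tau f})$ for every $f \in \mathcal{F}$. That $k(\varepsilon_A)/k(\End A)$ is abelian then follows because the ambient torus $\MT(A)$ is abelian, so the component group of $\Gl$, which acts on it faithfully by permuting coordinate characters, embeds into an abelian group; alternatively one invokes the main theorem of complex multiplication, which realizes this action through the (abelian) Galois group of the reflex field's class field theory.

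The main obstacle I anticipate is the bookkeeping in the step linking "the de Rham class $\omega_f$ is defined over $L$" to the period identity — one must carefully track how the Galois action on $\HH^1_{\dR}(A/kF)$ interacts with the Betti–de Rham comparison, since $\tau$ does not act $\C$-linearly and the period $P(\sigma,\omega_f)$ lives in $\bar{\Q}$ only after invoking Deligne's algebraicity result. Concretely, one has two expressions for the "same" cohomology class after applying $\tau$: the de Rham expression $\omega_{\tau f}$ and the Betti expression obtained by conjugating the period decomposition of $\omega_f$; equating them forces $\tau P(\sigma,\omega_f) = P(\sigma,\omega_{\tau f})$, and conversely this equality lets the class descend. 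Making this rigorous requires choosing compatible rational structures on both sides over $kF$ and checking that the cycle $\sigma$ (the Künneth power of the $E$-generator $\lambda$) is genuinely Galois-stable so that applying $\tau$ to $P(\sigma, \omega_f)$ only conjugates the differential-form factor and not the cycle factor; I expect the bulk of the real work, and the place where the hypotheses on $\lambda$ and on $F$ being a normal closure are used, to be concentrated there.
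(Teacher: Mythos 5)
Your proposal follows essentially the same route as the paper: reduce to the Galois action on Tate classes, identify these (via the Mumford--Tate conjecture and Deligne's theorem) with the absolute Hodge classes spanned by the lines $V_F(I_f)$ for $f\in\mathcal{F}$, and read off the action on the de Rham realization through the periods, exactly as in \Cref{prop: W is generate by VIf}, \Cref{proposition: Well is the ladic realization of CAH} and \Cref{proposition: CI is Galois stable iff period condition}. The one defective step is your first justification of abelianness: the component group acts on the diagonal torus by \emph{permuting} the coordinate characters, and a permutation group need not be abelian, so ``the ambient torus is abelian'' proves nothing; only your alternative argument via the main theorem of complex multiplication is valid, and it is the one the paper uses (the extensions $k(\End A, A[\ell^\infty])/k(\End A)$ are abelian by CM theory, and $k(\varepsilon_A)$ is their intersection).
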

\begin{proof}
    The inclusion $k(\varepsilon_A) \supseteq k(\End A)$ holds for any abelian variety \cite[Proposition 2.10]{SilvZarhin}. Characterization
    \Cref{eq: characterization 3} yields
    \[
        \textstyle k(\varepsilon_A) = \bigcap_\ell k(\End(A), A[\ell^\infty])
    \]
    and the extensions $k(\End(A), A[\ell^\infty])/k(\End A)$ are abelian by CM theory \cite[Proposition 7.3.(a)]{MilneCM}.
    Fix an auxiliary prime $\ell$ %
    and consider the $\ell$-adic representation $\rho_\ell$.
    By \cite[Theorem 2.2.2]{gallese_part2}, the image $\rho_\ell(\tau)$ lies in the connected component of the monodromy group if and only if it fixes pointwise the algebra of Tate classes $W_\ell$.

    The key insight is that $W_\ell$ is the $\ell$-adic étale realization of the algebra of absolute Hodge cycles $C_{\AH}^\vee$, as a Galois representation: see \Cref{proposition: Well is the ladic realization of CAH}. %
    In turn, the Galois action on the algebra of absolute Hodge cycles can be computed on its de Rham realization.
    It follows from \Cref{corollary: eigenspace decomposition of CAH} and \Cref{proposition: CI is Galois stable iff period condition} %
    that the action is trivial exactly when the period condition in the statement is satisfied. %
\end{proof}

\begin{remark}
    The arguments above can be pushed further to describe the full monodromy group $\Gl$. Remarkably, the resulting description is independent of the auxiliary prime $\ell$.
    Let $V = \HH_1^B(A(\C), \Q)$. The isomorphism $W \simeq C_{\AH}^\vee$ of \Cref{prop: W is the etale realisation of CAH} allows $\GL_V$ to act on the algebra of absolute Hodge classes.
    Define the algebraic subgroup $\mathcal{G}\subseteq \GL_V$ such that: for all $\Q$-algebras $L$ we have 
    \[ \mathcal{G}(L) =  \{ g \in \GL_V(L) \mid \exists \tau \in \Gal(k(\varepsilon_A)/k) \text{ s.t. } g(w) = \tau(w) \quad \forall w \in C_{\AH}^\vee \otimes_\Q L \}.  \]
    Arguing as in the proof of \Cref{theorem: computation of kconn} and applying \cite[Theorem 2.2.2]{gallese_part2} to describe $\Gl$, it follows that $\mathcal{G}\times_{\Spec\Q} \Spec\Q_\ell \simeq \Gl$ for all primes $\ell$. One can then compute the Galois action on $C_{\AH}^\vee$ via its de Rham realization as in the proof of \Cref{proposition: CI is Galois stable iff period condition} and derive explicit equations for $\mathcal{G}$. %
    See also \Cref{remark: Galois action without assumption 2}.
\end{remark}

\subsection*{Computations}
If $A$ is the CM Jacobian of a smooth projective curve, one can numerically compute the periods appearing in \Cref{theorem: computation of kconn} and identify them with algebraic numbers. From this identification, one obtains a candidate minimal polynomial for one of the generators of $k(\varepsilon_A)$. To turn this computation into a formal proof, we would need to certify that the recognized minimal polynomial is indeed correct. This is sometimes possible, as shown for example in \cite[Section 6.3]{gallese_part1}, where the periods are computed as exact algebraic numbers in the case of the Fermat curves.

\begin{remark}
    The method applies to any CM abelian variety $A$, provided one has access to the period matrix in the eigenbasis $\{\omega_i\}$. In the remainder of the paper we focus on the case of Jacobians, detailing the computations in this setting. In \Cref{remark: general case n1}, \Cref{remark: general case n2}, and \Cref{remark: general case n3}, we isolate the hypotheses on $A$ required for the generalizations.
\end{remark}

In \Cref{section: examples} we illustrate the algorithm with explicit computations. In particular, we determine the connected monodromy field of the Jacobian $J$ of a curve $X$ in three cases where $J$ is degenerate (that is, its Hodge ring is not generated by divisors). The corresponding \textsc{Magma} code for these computations is available on GitHub \cite{myrepo2025}.

\subsection*{Notation}
For ease of reference, we collect here the main notations used in the rest of the paper.
    \begin{itemize}
    \item $J$ is a $g$-dimensional abelian variety over the number field $k$, with complex multiplication by the CM algebra $E = E_1 \times \cdots \times E_t$.
    \item $\Phi= \{\chi_1, \,\ldots,\, \chi_g\}$ is the CM type of $J$. We set $\chi_{i+g}=\bar{\chi}_i$ for $i=1,\ldots,g$.
    \item $F$ is a finite Galois extension of $\Q$ containing each $E_i$. 
    \item $V$ is the first singular cohomology group $\HH_1^B(A(\C), \Q)$. We set $V_F \defeq V \otimes_\Q F$.
    \item $\omega_1,\, \ldots,\, \omega_{g}$ is a basis of $\HH^0(J_{kF}, \Omega^1)$ such that $\alpha \cdot \omega_i=\chi_i(\alpha) \omega_i$ for $i=1,\ldots,g$ and all $\alpha \in E$. We extend this to an eigen-basis $\omega_1, \, \dots, \, \omega_{2g}$ of $\HH^1_{\dR}(J/k)$ for the $E$-action.
    
    \item $v_1,\, \ldots,\, v_{2g}$ is a basis of $V_F$ such that $\alpha \cdot v_i = \chi_i(\alpha) v_i$ for all $i=1,\ldots, 2g$.
\end{itemize}

\subsection*{Acknowledgements} We thank Thomas Bouchet, Jeroen Hanselman, Andreas Pieper, and Sam Schiavone for sharing with us the equation of the curves studied in \Cref{ex: Mumford I,ex: Mumford II}.
D.L. was supported by the University of Pisa through grant PRA-2022-10 and by MUR grant PRIN-2022HPSNCR (funded by the European Union project Next Generation EU). Both authors are members of the INdAM group GNSAGA.

\section{Computing the endomorphism ring}
Let $X$ be a smooth, proper, geometrically connected curve of genus $g$ defined over a number field $k\subseteq \C$.
Denote by $J$ its Jacobian.
We can compute the geometric endomorphism ring $\End(J_{\Bar{k}})$ using the algorithm of \cite{CMSV, CostaLombardoVoight}. 

Fix a basis $\omega_1, \, \dots, \, \omega_g$ for the $k$-vector space of Kähler differentials $\HH^0(X, \Omega_X^1)$ and a basis for the first singular cohomology group $\HH^1_B(J(\C), \Z)$, symplectic with respect to the intersection form. Consider the complex uniformization $J({\C}) \simeq \C^g/\Lambda$ and denote by $\Pi \in M_{g \times 2g}(\C)$ the period matrix in the fixed bases (that is, $\Pi_{ij}=\int_{\gamma_j} \omega_i$ for $i=1,\ldots,g$ and $j=1,\ldots,2g$). Recall that there is a natural identification $\Lambda = \HH_1^B(J(\C), \Z)$.

The \emph{tangent representation} of the endomorphism algebra is defined by left multiplication on the tangent space $\HH^0(X, \Omega_X^1)^\vee\otimes_k \C$. An endomorphism $\alpha$ corresponds to a linear transformation that respects the lattice $\Lambda \subseteq \C^g$. Thus, in the fixed basis, $\alpha$ is represented by a complex matrix $M_\alpha \in M_{g \times g}(\C)$ such that
\begin{equation}
{\label{eq: Period matrix endomorphism relation}}
    M_\alpha \Pi = \Pi A_\alpha
\end{equation}
for a suitable integral matrix $A_\alpha \in M_{2g \times 2g}(\Z)$. The coefficients of the matrix $M_\alpha$ are algebraic -- that is, $M_\alpha$ lies in $M_{g \times g}(\bar{\mathbb{Q}})$ -- so we can (usually) recognize $M_\alpha$ numerically as a matrix with algebraic coefficients, see \cite[Section 2.2]{CMSV}.
The \emph{rational representation} $V = \Lambda \otimes_\Z \Q$ is the map $\alpha\mapsto A_\alpha$.
This allows for a day-and-night algorithm that during the day computes pairs $(M,A)$ satisfying relation \Cref{eq: Period matrix endomorphism relation} (up to some numerical precision), and during the night certifies whether they correspond to an endomorphism. In \cite{CostaLombardoVoight}, this algorithm is proven to terminate under the assumption of the Mumford-Tate conjecture for $J$. %

\begin{lemma}
    The field of definition of the endomorphisms $k(\End(J))$ is the field over which the coefficients of all $M_i$ are defined.
\end{lemma}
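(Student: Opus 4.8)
The plan is to deduce the statement from two properties of the tangent representation $\alpha\mapsto M_\alpha$: that it intertwines the natural $\Gal(\bar k/k)$-actions on endomorphisms and on matrices, and that it is injective. Write $K\defeq k(\End J)$, fix a finite generating set $\alpha_1,\dots,\alpha_r$ of the ring $\End(J_{\bar k})$, and let $L\subseteq\bar k$ be the extension of $k$ generated by all entries of $M_{\alpha_1},\dots,M_{\alpha_r}$. Since $M_{\alpha\beta}=M_\alpha M_\beta$, $M_{\alpha+\beta}=M_\alpha+M_\beta$ and $M_{-\alpha}=-M_\alpha$, the endomorphisms whose tangent matrix has entries in a fixed subfield of $\bar k$ form a subring of $\End(J_{\bar k})$; hence $L$ does not depend on the chosen generators and coincides with the subfield of $\bar k$ generated over $k$ by the entries of $M_\alpha$ for all $\alpha\in\End(J_{\bar k})$. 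The goal is then to prove $L=K$.

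For the inclusion $L\subseteq K$ I would argue as follows: if $\alpha$ is defined over $K$, then it acts by a $K$-linear map on $\HH^0(X_K,\Omega^1_{X_K/K})=\HH^0(X,\Omega^1_X)\otimes_k K$, so — since $\omega_1,\dots,\omega_g$ form a $k$-basis — the induced action on the tangent space is given by a matrix with entries in $K$, and this matrix becomes $M_\alpha$ after base change to $\bar k$; applying this to $\alpha_1,\dots,\alpha_r$ gives $L\subseteq K$. For the reverse inclusion I would use Galois descent: it is enough to show that each $\sigma\in\Gal(\bar k/L)$ fixes $\End(J_{\bar k})$ pointwise, and since $\alpha_1,\dots,\alpha_r$ generate this ring it suffices to check it on the generators. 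Here one uses that the $\omega_i$ are defined over $k$: the group $\Gal(\bar k/k)$ then acts on $\Lie(J_{\bar k})=\Lie(J)\otimes_k\bar k$ through the second factor, and consequently the matrix of $\sigma(\alpha)\defeq\sigma\circ\alpha\circ\sigma^{-1}$ in the fixed basis is obtained from $M_\alpha$ by applying $\sigma$ to each entry, that is, $M_{\sigma(\alpha)}=\sigma(M_\alpha)$. If $\sigma\in\Gal(\bar k/L)$ then $\sigma$ fixes the entries of $M_{\alpha_i}$, so $M_{\sigma(\alpha_i)}=M_{\alpha_i}$, and injectivity of $\alpha\mapsto M_\alpha$ forces $\sigma(\alpha_i)=\alpha_i$; hence $\Gal(\bar k/L)$ fixes $\End(J_{\bar k})$, so all endomorphisms are defined over $L$, giving $K\subseteq L$.

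It remains to justify that $\alpha\mapsto M_\alpha$ is injective. One can invoke the general fact that, in characteristic zero, an endomorphism of an abelian variety is determined by its action on the tangent space at the origin; in the present analytic setting this also follows directly from \Cref{eq: Period matrix endomorphism relation}, since combining $M_\alpha\Pi=\Pi A_\alpha$ with its complex conjugate and the invertibility of the $2g\times 2g$ matrix $\bigl(\begin{smallmatrix}\Pi\\ \overline{\Pi}\end{smallmatrix}\bigr)$ — a consequence of the Hodge decomposition $\HH^1_{\dR}(J/\C)=\HH^0(J,\Omega^1)\oplus\overline{\HH^0(J,\Omega^1)}$ — shows that $M_\alpha$ determines $A_\alpha$, while the rational representation $\alpha\mapsto A_\alpha$ is faithful. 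The computations involved are routine; the point I expect to need the most care is the Galois equivariance $M_{\sigma(\alpha)}=\sigma(M_\alpha)$, which relies essentially on the $\omega_i$ being chosen over $k$ rather than merely over $\bar k$, so that $\sigma$ acts on the tangent space purely through the coefficients — this is the one place where the rationality of the basis is used, and it is the step I would spell out in full.
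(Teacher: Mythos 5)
Your proposal is correct and follows essentially the same route as the paper: both arguments rest on the faithfulness of the tangent representation together with the Galois equivariance $M_{\sigma(\alpha)}=\sigma(M_\alpha)$, which holds precisely because the basis $\{\omega_i\}$ is chosen $k$-rational. You simply spell out the two inclusions and the injectivity in more detail than the paper, which states the equivariance computation $[\tau(\alpha^\ast)\omega_i]=\tau[\alpha^\ast\omega_i]$ and concludes directly.
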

\begin{proof}
    This follows from a stronger statement: the field of definition of the endomorphism $\alpha \in \End(J_{\bar k})$ is the field generated by the coefficients of the corresponding matrix $M_\alpha$.
The tangent representation, which is faithful, can be expressed as the map sending $\alpha \in \End(J_{\bar k})$ to the matrix of the pullback $\alpha^*$ in the basis $\{\omega_i\}$,
\[
\End(J_{\bar k}) \hookrightarrow M_{g \times g}(\bar k), \quad \alpha \mapsto [\alpha^\ast \omega_i].
\]
The Galois action of an automorphism $\tau \in \Gal(\bar k/k)$ is thus given by
\[ \tau(\alpha) \mapsto [\tau(\alpha^\ast) \omega_i] = [\tau(\alpha^\ast) \tau(\omega_i)] = \tau[\alpha^\ast \omega_i], \]
where we used that each $\omega_i$ is $k$-rational. It follows that $\tau$ fixes $\alpha$ if and only if it fixes $M_\alpha$.
\end{proof}

As a consequence, from the matrices $M_\alpha$, we can compute the field of definition of the endomorphisms of $J$. 

\begin{remark}
{\label{remark: general case n1}}
For a general abelian variety (that we still denote by $J$), we may replace the computations described in this section with the following assumption.
\begin{enumerate}
    \item{\label{assumption 1: we are given M and A}} We are given the period matrix $\Pi$ to sufficient numerical precision and matrices
    \begin{equation*}
        {\label{eq: matrices of the tangent representation}}
        M_1, \, \dots,\, M_r \in M_{g \times g}(\bar{\Q})
    \end{equation*}
    giving a basis of the $\Z$-algebra $\End(J_{\bar{k}})$ in the tangent representation.
        Corresponding integral matrices $A_1, \, \dots, \, A_r \in M_{2g \times 2g}(\Z)$ satisfying relation \Cref{eq: Period matrix endomorphism relation} can then be easily calculated.
\end{enumerate}
\end{remark}

Under assumption (\ref{assumption 1: we are given M and A}), without loss of generality we can also suppose the following:
\begin{enumerate}
\setcounter{enumi}{1}
    \item{\label{assumption 2: all endomorphisms are defined over k}} All endomorphisms of $J$ are defined over $k$, i.e.~$k \supseteq k(\End(J))$.
\end{enumerate}
Indeed, we can compute $k' = k(\End(J))$ from the tangent representation and replace $J$ by its base-change $J' = J \times_k k'$.
        Recall that $k(\varepsilon_J) \supseteq k(\End(J))$ \cite[Proposition 2.10]{SilvZarhin} so that our final computation of $k(\varepsilon_J)$ is unaffected. 
        From this point on, we work under the assumption $k = k(\End (J))$, and consequently denote by $\End(J)$ the geometric endomorphism ring.

\section{Computing the CM-type}
\label{section: compute the CM type}
We further assume $J$ to have complex multiplication, in the sense of \cite[\S 3]{MilneCM}. By \cite[Proposition 3.6]{MilneCM}, there is an embedding of a CM algebra $E$ (a finite product of CM fields $E_i$) of degree $2g$ into the endomorphism algebra: 
\begin{equation}
    {\label{eq: embedding iota}}
    \iota \colon E \hookrightarrow \End^0(J) \defeq \End(J)\otimes_{\Z}\Q.
\end{equation}
Note that $E$ is not necessarily unique, but in what follows we can work with any CM subalgebra of $\End^0(J)$ of degree $2g$.
We now explain how to compute $\iota E$.

\smallskip

We know from \cite[Corollary 7.4]{MilneCM} that $\iota E$ is a commutative self-centralizing subalgebra of $\End^0(J)$ (there is a small typo in Milne's statement: $F$ should be replaced by $E$).
Having generators $A_1, \, \dots, \, A_r$ for the matrix algebra $\End^0(J)$ allows us to compute its Wedderburn decomposition 
\[ \End^0(J) \simeq M_{n_1 \times n_1} (D_1) \times \dots \times M_{n_t \times n_t}(D_t), \]
as explained in \cite{AlgebraDecomposition}, where each $D_i$ is a simple division algebra. Since we are in the CM case, each $D_j$ is in fact a CM field \cite[Proposition 3.1]{MilneCM}. Concretely, we can assume the generators $A_i$ to be block diagonal, with blocks of dimension $n_1, \dots, n_t$.

To compute (a valid choice of) $\iota E$ it is thus enough to find a maximal self-centralizing CM subalgebra in each simple factor. If a polarization is fixed (and is given explicitly as a bilinear form on $\Q^{2g}$), we may choose the image of $\iota E$ so that it is stable under the Rosati involution corresponding to the given polarization \cite[Proposition 3.6(c)]{MilneCM}. %

\begin{remark}
In practice, we proceed as follows. First, we compute the center of each factor $M_{n_i \times n_i}(D_i)$, which gives the CM field $D_i$. Next, we take the points in $M_{n_i \times n_i}(D_i)$ fixed by the Rosati involution, obtaining a subalgebra $D'_i$. We then compute the centralizer of a \emph{splitting element} $s_i$ of $D'_i$; this gives a maximal self-centralizing subalgebra $E_{i, 0}$ of $D'_i$, which is clearly fixed under the Rosati involution by construction.
A splitting element is a `general enough' element of the matrix algebra, which we compute by taking a random linear combination of the generators and then verifying that its centralizer generates an algebra of the correct dimension, as explained in \cite[\S 2, Observation 1]{MR1758404}. Finally, we take as CM algebra $\iota E$ the product over $i$ of the subalgebras generated by the CM center $D_i$ together with $E_{i, 0}$; both $D_i$ and $E_{i, 0}$ are stable under the Rosati involution, hence so is $\iota E$. Naturally, when each $n_i$ is $1$, proceeding in this way we get $\End^0(J)$ itself.
\end{remark}

We will have no further use for the full endomorphism ring $\End^0(J)$, and will only need its subring $\iota E$. Up to a change of basis (determined by the previous calculation), we can assume that for some $s \leq r$ the matrices $M_1, \ldots, M_s$ generate $\iota E$. We will only work with $M_1, \ldots, M_s$ and the corresponding integral matrices $A_1, \ldots, A_s$.

\subsection{The CM type} The commutative algebra $E$ acts on the tangent space by formally composing $\iota$ with the tangent representation. The resulting abelian representation on $\HH^0(X, \Omega_X^1) \otimes_k \C$ decomposes as the sum of $g$ one-dimensional eigenspaces, indexed by a set of \emph{distinct} characters that we identify with homomorphisms
\begin{equation}
\label{eq: CM type}
    \Phi = \{ \chi_i\colon E \to \C \}.
\end{equation}
The set $\Phi$ has the property that no two homomorphisms are complex-conjugate and therefore $\Phi \sqcup \bar{\Phi} = \Hom(E, \C)$. The set $\Phi = \{ \chi_1, \, \dots ,\,  \chi_g\}$ is called the CM-type of the abelian variety $J$. We set $\chi_{i+g} = \overline{\chi_i}$ for all $i=1,\ldots, g$, so that $\bar{\Phi} = \{\chi_{g+1}, \, \dots, \,\chi_{2g}\}$.
We can compute the CM-type from the tangent representation. %

\begin{remark}
In practice, we proceed as follows. 
Upon restricting to the $i$-th diagonal block, the minimal polynomials of the generators $A_1, \, \dots, \, A_{s}$ determine the extension $E_i/\Q$.
Indeed, one can construct the extension inductively, by considering at the $(j+1)$-th step the minimal polynomial of $A_{j+1}$ over the extension determined by $A_1, \, \dots, \, A_j$.
Alternatively, one can compute the minimal polynomial of a splitting element $s_i$, as explained in \cite{Eberly2}.%

    We can replace the target of the homomorphisms in \Cref{eq: CM type} with the Galois closure $F$ of the compositum of all extensions $E_i/\Q$ (or more generally, any Galois extension of $\Q$ containing each $E_i$). 
    In particular, the eigenvalues of any $M_j$ are contained in $F$.
    The matrices $M_1, \, \dots, \, M_{s}$ commute and are diagonalizable (over $F$). It is a straightforward exercise in linear algebra to determine a simultaneous eigenbasis $\{\omega_j\}_j$, where each $\omega_j$ is an element of $H^0(X, \Omega_X^1) \otimes_k kF$.
    The eigenvalues determine the homomorphisms in \Cref{eq: CM type}.
\end{remark}

A similar argument applies to the {rational representation} $V=\Lambda\otimes_{\Z}\Q$ sending $\alpha \mapsto A_\alpha$. 
The base change $V_{\C}$ always splits as the direct sum of the tangent representation and its complex conjugate.
In the CM case, the base change $V_{F}:= V \otimes_\Q F$ decomposes as the direct sum of one-dimensional representations indexed by $\Hom(E, \C) = \Phi\sqcup \bar{\Phi}$.
We compute an eigenbasis $v_1, \, \dots, \, v_{2g}$ of $V_F$, indexed so that $A_\alpha v_j = \chi_j(\alpha) \cdot v_j$ for all $\alpha \in \iota(E)$.%

\begin{remark}
{\label{remark: general case n2}}
For a general abelian variety $J$, we can replace the computations presented in this section with the following assumptions.
\begin{enumerate}[start = 3]
    \item The abelian variety $J/k$ has complex multiplication and the CM-pair $(E, \Phi)$ is given.
    For a given $s \leq r$, the matrices $M_1, \, \dots, M_s$ generate $\iota E \subseteq \End(A)$.

    \item{\label{assumption 4: eigenbasis omega}} We are given a basis $\omega_1, \, \dots, \, \omega_g$ for $\HH^0(J, \Omega^1){\otimes_k kF}$ such that $\alpha(\omega_i)=\chi_i(\alpha)\cdot \omega_i$ for all $\alpha \in E$. 

    \item We are given an eigenbasis $v_1, \, \dots, \, v_{2g}$ for the rational representation $V_{F}$.
\end{enumerate}
\end{remark}

\section{Computing the Mumford-Tate group}
Let $E$ be a CM field and $\Phi$ be a CM type on $E$. Fix a CM field $F$ that contains $E$ and is Galois over $\Q$ (for example, we can take $F$ to be the Galois closure of $E/\Q$) and let $G$ be the Galois group of $F$ over $\Q$.
The set %
\[\tilde{\Phi} = \{ \chi \in \Hom(F, \C) \mid \chi_{\mid E} \in \Phi \}\]
is a CM type on $F$.
The fixed field of the reflex subgroup $H^\ast = \{g \in G \mid  \tilde\Phi g = \tilde\Phi \} \leq G$ is, by definition, the reflex field $E^\ast$. This is a CM field which comes equipped with the reflex CM-type $\Phi^\ast$, given by the restriction to $E^\ast$ of $\tilde\Phi^{-1} := \{ \chi^{-1} : \chi \in \tilde{\Phi} \}$, where $\chi^{-1}$ is the inverse of $\chi$ seen as an element of the group $G$.
The reflex norm is the map
\begin{equation}
    {\label{eq: reflex norm fields}}
    \textstyle
    N_{\Phi^\ast}\colon E^\ast \to E, \quad x \mapsto \prod_{\chi \in \Phi^\ast} \chi(x). 
\end{equation}

\begin{definition}
    Given a field extension $E/\Q$, consider the algebraic torus $T_E = \Res_{\Q}^{E}\mathbb{G}_{m, E}$, where $\Res_{\Q}^{E}$ denotes the Weil restriction of scalars. The association $E \mapsto T_E$ is functorial.
\end{definition}

We now go back to considering our CM Jacobian $J$. Throughout the rest of the paper, we will work with a field $F$ chosen as follows:
\begin{definition}\label{def: field F}
We let $F$ be a finite Galois extension of $\Q$ that contains each of the components $E_1, \,\ldots,\, E_t$ of the CM algebra $E = E_1 \times \cdots \times E_t$.    
\end{definition}

An application of \cite[Lemma 4.1]{LombardoMT} allows us to compute the Mumford-Tate group $\MT(J)$ as the image of the map
\begin{equation}
    {\label{eq: MT computation tori step}}
    \begin{tikzcd}
        T_{F} \rar["N_{F/E_i^\ast}"] & \prod_i T_{E_i^\ast} \rar["N_{\Phi_i^\ast}"] & \prod_i T_{E_i} \subseteq \GL_V,
    \end{tikzcd}
\end{equation}
where
$N_{F/E_i^\ast}$ is the norm map and 
the torus $\prod T_{E_i}$ is naturally embedded in $\GL_V$ via the rational representation. %

A homomorphism $F_1^\times \to F_2^\times$ between the multiplicative groups of two number fields that is given by a (finite) product of homomorphisms $F_1 \to \mathbb{C}$ gives rise in a natural way to an algebraic morphism $T_{F_1} \to T_{F_2}$. In turn, by exploiting the anti-equivalence between the category of algebraic tori over $\Q$ and the category of free abelian groups with a continuous $\Gal(\bar{\Q}/\Q)$-action, we can interpret such a morphism as a linear map between free $\Z$-modules, that is, as an integral matrix.
Identifying a homomorphism $E \to \mathbb{C}$ to a tuple $(\chi_i)_{i=1,\ldots,t}$ of homomorphisms $\chi_i : E_i \to \mathbb{C}$, all but one trivial (note that a map $E \to \mathbb{C}$ factors via one of the quotients $E_i$ of $E$), we rewrite \Cref{eq: MT computation tori step} as %
\begin{equation*}
    \begin{tikzcd}[column sep = 3.2cm]
        \Z[\Hom(E, \C)]
        \rar["(\chi_i)_i \mapsto \left(\sum_{\sigma \in \Phi^\ast_i} \sigma\chi_i\right)_i"] &
        \Z[\Hom(\prod_i E_i^\ast, \C)]
        \rar["\chi \mapsto \sum_i\sum_{g \in \Gal(F/E_i^*)} g\chi"] &
        \Z[\Hom(F, \C)].
    \end{tikzcd}
\end{equation*}
The kernel of the composition above, which is a free abelian group with a Galois action, corresponds to the torus $\MT(J) \subseteq \GL_V$.

Upon extension of scalars to $F$, the rational representation is diagonal in the basis $\{v_i\}_i$, and we can identify $(T_E)_F  = T_E \times_{\Spec \Q} \Spec F$ to the diagonal torus of $\GL_{V, F}$ (recall that $V$ is a free $E$-module, see \cite[Proposition 3.6(c)]{MilneCM}). Fix coordinates $\{x_i\}_i$ for the diagonal torus. Each element in the kernel %
can then be interpreted as a Laurent monomial $f :=x_1^{e_1} \cdots x_{2g}^{e_{2g}}$ in the variables $x_i$ with the property that $f$ is identically $1$ on the Mumford-Tate group. By fixing a basis of this kernel, we then obtain a finite number of Laurent monomials $f_1, \, \dots, f_r$ defining the coordinate ring of the Mumford-Tate group (as in \cite[Section 4.2]{gallese_part1}):
\begin{equation}\label{eq: eqs for MT}
    \mathcal{O}_{\MT(J)_F} = F[x_1^{\pm 1},\dots, x_{2g}^{\pm 1}]/(f_1-1, \dots, f_r-1). 
\end{equation}

\begin{remark}\label{rmk: MT equations are homogeneous of degree 0}
    Note that each equation $f = \prod_{i=1}^{2g} x_i^{e_i}$ has total degree $\sum_{i=1}^{2g}e_i=0$.
    Indeed, since the Mumford-Tate group contains the homotheties, any Laurent monomial that is identically equal to $1$ on $\MT(J)$ must have total degree zero.
    The \emph{weight} of $f$ is the integer $n = \tfrac{1}{2} \cdot \sum_{i=1}^{2g}|e_i|$.
\end{remark}

\begin{remark}
{\label{remark: general case n3}}
For a general CM abelian variety, we can replace the computations presented in this section with the following assumption.
\begin{enumerate}[start = 6]
    \item{\label{assumption 6: family of MT equations}} We are given a finite generating set of equations $\mathcal{F} = \{f_1, \, \dots, \, f_r\}$ of $\MT(J)$ in the sense of \cite[Definition 4.2.4]{gallese_part1}. Each $f$ is a Laurent monomial $x_1^{e_1}\cdots x_{2g}^{e_{2g}}$.
\end{enumerate}
\end{remark}

\section{Computing the algebra of Hodge classes}
Recall that a \textit{(pure) Hodge structure} of weight~$n$ on a rational vector space $V$ 
is a decomposition of its complexification
\begin{equation}\label{eq: Hodge structure}
    V_{\mathbb{C}} := V \otimes_{\mathbb{Q}} \mathbb{C}
    = \bigoplus_{p+q=n} V^{p,q}
\end{equation}
such that $\overline{V^{p,q}} = V^{q,p}$. 
When $n = 0$, an element of $V^{0,0} \cap V$ is called a \textit{Hodge class}. 
For a complex abelian variety~$A$, the cohomology group $V := \HH^1(A, \mathbb{Q})$ 
carries a natural Hodge structure of weight~$1$, 
with $V^{1,0}$ given by the space of holomorphic $1$-forms on~$A$ 
and $V^{0,1}$ by its complex conjugate. 
A \textit{morphism of Hodge structures} is a morphism of $\mathbb{Q}$-vector spaces whose complexification respects the decomposition~\eqref{eq: Hodge structure}. %
The Hodge structure~$\mathbb{Q}(1)$ is, by definition, the unique one-dimensional Hodge structure of weight~$-2$. 
For each Hodge structure~$V$ and each $n \in \mathbb{Z}$, we denote by $V(n)$ the Hodge structure $V \otimes \mathbb{Q}(1)^{\otimes n}$, 
where $\mathbb{Q}(1)^{\otimes n} := \operatorname{Hom}(\mathbb{Q}(1), \mathbb{Q})^{\otimes |n|}$ for $n < 0$. 
Finally, attached to a Hodge structure~$V$, there is a canonical algebraic subgroup of~$\GL_V$, 
called the \textit{Mumford–Tate group}~$\operatorname{MT}(V)$ of~$V$. 
See~\cite{moonenIntro} for further details.

\smallskip

From this point on, we let $A/k$ be a CM abelian variety that satisfies assumptions (1) to (6) of \Cref{remark: general case n1,remark: general case n2,remark: general case n3}. Let $V:=\HH_1^B(A(\C), \Q)$ be the associated Hodge structure of weight $-1$. The Mumford-Tate group of $A$ is by definition the Mumford-Tate group of $V$. A polarization $A \to A^\vee$ gives an isomorphism $V \simeq V^\vee(1)$ of Hodge structures. 
Note that for any $n \geq 0$ the tensor product $V^{\otimes n} \otimes (V^\vee)^{\otimes n}$ is a pure Hodge structure of weight $0$ and consider the subalgebra of Hodge classes 
\begin{equation}
    \label{eq: algebra of Hodge classes}
    \textstyle W \subseteq \bigoplus_{n \geq 0} V^{\otimes n} \otimes V^{\vee,\otimes n} \simeq \bigoplus_{n \geq 0} V^{\otimes 2n}(-n).
\end{equation}
The algebra structure is induced by the isomorphism $\bigoplus_n V^{\otimes 2n}(-n) \simeq \bigoplus_n V^{\vee,\otimes 2n}(n)$: the right-hand side is a subring of the cohomology algebra, where the cup product is naturally defined.

\begin{definition}
{\label{definition: index tuple for MT equation}}
    Given a Laurent monomial $f = x_1^{e_1}\cdots x_{2g}^{e_g}$ of total degree $\sum_i e_i = 0$, let $I_f$ be the index tuple obtained by concatenating
    \begin{equation*}
        \underbrace{(i, i, \dots, i)}_{e_i \text{ times}}
        \text{   if } e_i>0, \quad\text{ or }\quad
        \underbrace{(c(i), c(i), \dots, c(i))}_{|e_i| \text{ times}} \text{   if } e_i<0,
    \end{equation*}
    for $i=1, \dots, 2g$, where $c(i) = i+g \bmod 2g$.
    This applies in particular to equations $f$ for $\MT(A)$, see \Cref{rmk: MT equations are homogeneous of degree 0}.
\end{definition}

Recall the field $F$ introduced in \Cref{def: field F}.

\begin{definition}
{\label{definition: eigenspaces for the endomorphism action}}
    Given an index $2n$-tuple $I=(i_1, \ldots, i_{2n})$, where each $i_j$ is in $\{1, \dots, \, 2g\}$, we define $V_F(I) \subseteq V^{\otimes 2n}(-n) \otimes F \cong V_F^{\otimes 2n} \otimes_F F \,(2\pi i)^{-n}$ as the $1$-dimensional $F$-subspace spanned by
    \[
    \textstyle v_I \defeq \bigotimes_{i \in I} v_i \; \otimes (2\pi i )^{-n} .
    \]
\end{definition}

In the following lemma, we determine the eigenvalues of the Mumford–Tate group acting on $v_I$.

\begin{lemma}\label{lemma: MT action on tensor powers}
    Let $x$ be a point in $\MT(A)_F(\bar{F}) \subseteq (T_E)_F(\overline{F})$. Recall that we have identified $(T_E)_F$ with the diagonal torus of $\GL_{V, F}$, so we can write $x=\operatorname{diag}(x_1,\ldots,x_{2g})$ for certain $x_1, \ldots, x_{2g} \in \bar{F}^\times$.
    Let $N,\, D$ be two $n$-tuples of indices in $\{1,\, \ldots, \, 2g\}$ and let $I$ be the $2n$-tuple obtained as the concatenation of $N$ and $D$. The element $x$ acts on $V_{F}(I)$ as multiplication by $\prod_{i \in N} x_i \cdot (\prod_{i \in D} x_{c(i)})^{-1}$, where $c(i)$ is as in \Cref{definition: index tuple for MT equation}.
\end{lemma}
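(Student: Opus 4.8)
The statement is essentially a bookkeeping computation, so the plan is to unwind the definitions carefully and track how the diagonal torus acts on a pure tensor. First I would recall from \Cref{definition: eigenspaces for the endomorphism action} that $V_F(I)$ is spanned by $v_I = \bigotimes_{j} v_{i_j} \otimes (2\pi i)^{-n}$, where $I = (i_1,\dots,i_{2n})$ is the concatenation of $N=(i_1,\dots,i_n)$ and $D=(i_{n+1},\dots,i_{2n})$. Under the identification $V^{\otimes 2n}(-n)\otimes F \cong V_F^{\otimes n}\otimes (V_F^\vee)^{\otimes n}$ used in \eqref{eq: algebra of Hodge classes}, the first $n$ slots of $v_I$ sit in copies of $V_F$ (contravariant, acted on by $x$ directly), while the last $n$ slots, although written in the basis $\{v_i\}$ of $V_F$, must be re-expressed through the polarization isomorphism $V \simeq V^\vee(1)$ as elements of $V_F^\vee$ — this is the only subtle point and I expand on it below.

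The core step is then purely multiplicative. Since $(T_E)_F$ is the diagonal torus in the basis $\{v_i\}$, an element $x = \operatorname{diag}(x_1,\dots,x_{2g})$ acts on the basis vector $v_i \in V_F$ by $x\cdot v_i = x_i v_i$, hence on a factor indexed by $i\in N$ it contributes the scalar $x_i$, giving the factor $\prod_{i\in N} x_i$. For the dual slots indexed by $i \in D$: under the polarization $V\simeq V^\vee(1)$, a short computation (using that the polarization pairing is $E$-compatible up to the Rosati involution, which on the CM algebra $E$ is complex conjugation, so that the $\chi_i$-eigenline pairs nontrivially with the $\bar\chi_i = \chi_{c(i)}$-eigenline) shows that $v_i$, viewed as lying in $V_F^\vee$, transforms under $x$ by the inverse of the eigenvalue on the $\chi_{c(i)}$-eigenline, namely $x_{c(i)}^{-1}$. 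This yields the factor $\bigl(\prod_{i\in D} x_{c(i)}\bigr)^{-1}$. Finally, the Tate twist factor $(2\pi i)^{-n}$ is fixed by $\MT(A)$ in the sense relevant here (the action of $\MT(A)_F$ on the one-dimensional $F$-space is already encoded in the $x_i$, consistently with \Cref{rmk: MT equations are homogeneous of degree 0} that the monomials have total degree zero), so it contributes nothing extra. Multiplying the two contributions gives that $x$ acts on $V_F(I)$ as $\prod_{i\in N} x_i \cdot \bigl(\prod_{i\in D} x_{c(i)}\bigr)^{-1}$, as claimed.

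The main obstacle is making the dual-slot bookkeeping precise, i.e.\ correctly identifying how the basis $\{v_i\}$ of $V_F$ induces a basis of $V_F^\vee$ via the polarization and confirming that the index $i$ gets sent to $c(i)$ there. Concretely, one checks that the symplectic form on $V$ restricts to a perfect pairing $V_F(\chi_i) \times V_F(\chi_{c(i)}) \to F(2\pi i)$ and is zero on $V_F(\chi_i)\times V_F(\chi_j)$ for $\chi_j \neq \bar\chi_i$; this is exactly the statement that $V$ is a free $E$-module with the Rosati involution acting as complex conjugation (\cite[Proposition 3.6(c)]{MilneCM}). Granting this, the dual basis vector attached to $v_i$ is a multiple of $v_{c(i)}^{\text{(dual)}}$, and since $x\cdot v_{c(i)} = x_{c(i)} v_{c(i)}$, the corresponding dual vector is scaled by $x_{c(i)}^{-1}$; the compensating $2\pi i$ from the pairing is absorbed into the Tate twist in the definition of $V_F(I)$. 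Everything else is a direct substitution.
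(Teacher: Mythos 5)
Your argument is correct and lands on the same formula by the same essential mechanism as the paper, just packaged in the dual model $V^{\otimes n}\otimes V^{\vee,\otimes n}$ rather than directly in $V^{\otimes 2n}(-n)$. The paper keeps all $2n$ slots in $V$, so each slot contributes $x_i$, and the Tate twist contributes $\operatorname{mult}(x)^{-n}$; it then computes $\operatorname{mult}(x)=x_ix_{c(i)}$ from the $\MT(A)_F$-stability of the symplectic plane spanned by $v_i, v_{c(i)}$ and distributes $\operatorname{mult}(x)^{-n}=\prod_{i\in D}(x_ix_{c(i)})^{-1}$ over the $D$-slots, yielding $x_i\cdot(x_ix_{c(i)})^{-1}=x_{c(i)}^{-1}$ on each of them. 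You instead transport the $D$-slots through the polarization $V\simeq V^\vee(1)$, under which $v_i$ becomes a multiple of $v_{c(i)}^\vee$ and is therefore scaled by $x_{c(i)}^{-1}$ under the contragredient action, the twist being consumed by this identification. Both routes rest on the same key input, namely that the polarization pairs the $\chi_i$-eigenline nontrivially only with the $\chi_{c(i)}$-eigenline (cf.\ \Cref{lemma: orthogonality differential forms}), and that the identification $V\simeq V^\vee(1)$ is $\MT(A)$-equivariant precisely because the form transforms by the multiplier. One caveat: your intermediate sentence asserting that the factor $(2\pi i)^{-n}$ ``is fixed by $\MT(A)$'' is not literally true --- the Mumford--Tate group acts on $\Q(-n)$ through $\operatorname{mult}(x)^{-n}$, which is nontrivial. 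Your closing paragraph shows you understand that this twist is exactly what gets absorbed into the dual identification, so the proof stands, but that sentence should be deleted or rephrased to avoid asserting a false triviality.
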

\begin{proof}
By definition, $x \in \MT(A){_F(\bar{F})}$ acts on $V_F^{\otimes 2n}(-n) { \otimes_F \bar{F}}$ as $x \otimes \cdots \otimes x \otimes \operatorname{mult}(x)^{-n}$, where $\operatorname{mult}(x)$ is the multiplier of the symplectic matrix $x$ \cite[\S 5.2]{moonenIntro}. Since $v_i,\, v_{c(i)}$ form an $\MT(A)_{F}$-stable symplectic plane for every $i$ {(see also \Cref{lemma: orthogonality differential forms} below)}, the multiplier of $x$ is $x_ix_{c(i)}$ for any $i \in \{1,\ldots,2g\}$. Since $|D|=n$, we have $\operatorname{mult}(x)^{-n} = \prod_{i \in D} \operatorname{mult}(x)^{-1}=\prod_{i \in D} (x_i x_{c(i)})^{-1}$.
Note furthermore that, by our choice of coordinates, $v_i$ is an eigenvector of $x$ with eigenvalue $x_i$.

Combining the above observations, we obtain that the action of $x$ on the generator $v_I = \bigotimes_{i \in I} v_i \otimes 1 = \bigotimes_{i \in N} v_i \otimes \bigotimes_{i \in D}v_i \otimes 1$ sends it to
    \[
    \begin{aligned}
    \bigotimes_{i \in I} xv_i \otimes \operatorname{mult}(x)^{-n}
    & = \bigotimes_{i \in N} x_i v_i \otimes \bigotimes_{i \in D} x_i v_i \otimes  \prod_{i \in D} (x_i x_{c(i)})^{-1} \\
    & = \prod_{i \in N} x_i \cdot \prod_{i \in D} \frac{x_i}{x_ix_{c(i)}} \bigotimes_{i \in N}v_i \otimes \bigotimes_{i \in D} v_i \otimes 1 \\
    & = \prod_{i \in N} x_i \cdot \prod_{i \in D} x_{c(i)}^{-1} \cdot v_I,
    \end{aligned}
    \]
    as desired.
\end{proof}

\begin{proposition}
    {\label{prop: W is generate by VIf}}
    Let $A$ be an abelian variety with complex multiplication and $W\subseteq \bigoplus_{n} V^{\otimes 2n}(-n)$ be the algebra of Hodge classes.%
    \begin{enumerate}
        \item The inclusion $V_F(I) \subseteq W_F$ holds if and only if there is an equation $f$ for $\MT(A)$ such that $I = I_f$. Conversely, given a Laurent monomial $f$, the inclusion $V_F(I_f) \subseteq W_F$ holds if and only if $f$ is an equation for $\MT(A)$.
        \item Let $\mathcal{F} =\{f_1,\ldots,f_r\}$ be a finite set of equations for $\MT(A)_F$, in the sense of \Cref{eq: eqs for MT}. The subspace $W'_F = \sum_{f \in \mathcal{F}} V_F(I_f)$ generates $W_F$.
    \end{enumerate}
\end{proposition}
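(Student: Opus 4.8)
The plan is to exploit the fact that, because $A$ has complex multiplication, both $\MT(A)$ and its action on the tensor powers $V^{\otimes 2n}(-n)$ are diagonal in the eigenbasis $\{v_i\}$, so that $W_F$ is spanned by a subset of the monomial lines $V_F(I)$. First I would argue that $W_F = (\bigoplus_n V^{\otimes 2n}(-n) \otimes F)^{\MT(A)_F}$: this is the definition of the Hodge classes together with the fact that, for a CM abelian variety, the Mumford--Tate group is a torus and every Hodge class is $\MT(A)$-invariant (and conversely). Since $\MT(A)_F$ is the diagonal torus cut out by the equations $f_1-1, \dots, f_r-1$ inside $(T_E)_F$, its invariants in $V_F^{\otimes 2n} \otimes F\,(2\pi i)^{-n}$ are spanned by exactly those monomial vectors $v_I$ on which every character defining the torus is trivial. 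By \Cref{lemma: MT action on tensor powers}, the character by which $x = \operatorname{diag}(x_1,\dots,x_{2g})$ acts on $v_I$, for $I$ the concatenation of $N$ and $D$, is the Laurent monomial $\prod_{i\in N} x_i \cdot \prod_{i\in D} x_{c(i)}^{-1}$; unwinding \Cref{definition: index tuple for MT equation}, this is precisely the monomial $f$ with $I = I_f$. Hence $V_F(I) \subseteq W_F$ if and only if the associated Laurent monomial is identically $1$ on $\MT(A)_F$, i.e.\ lies in the subgroup of the character lattice generated by $f_1,\dots,f_r$. This proves part (1) in both directions, once one checks that every weight-$n$ index tuple $I$ arises as $I_f$ for a unique total-degree-zero Laurent monomial $f$, which is immediate from \Cref{definition: index tuple for MT equation}.

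For part (2), I would observe that $W_F$ is spanned (as an $F$-vector space) by the lines $V_F(I_f)$ as $f$ ranges over \emph{all} equations for $\MT(A)_F$, that is, over the full subgroup $\Lambda_0 \subseteq \Z[\Hom(E,\C)]$ of Laurent monomials trivial on $\MT(A)$. The subspace $W'_F = \sum_{f\in\mathcal F} V_F(I_f)$ only uses the generating set $\mathcal F$. The point is that $W_F$ is generated \emph{as an algebra} by $W'_F$: the multiplication on $\bigoplus_n V^{\otimes 2n}(-n)$ corresponds under the identifications to concatenation of index tuples, hence to multiplication (addition in the lattice) of the corresponding Laurent monomials, and every $f \in \Lambda_0$ is a product $\prod f_j^{a_j}$ with $a_j \in \Z$. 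A mild subtlety is that the lattice uses both positive and negative exponents while $V_F(I_f)$ is built from a genuine tuple $I_f$ (all entries "positive" after applying $c$): multiplying the line $V_F(I_f)$ by $V_F(I_{f'})$ inside the cohomology algebra gives $V_F(I_{ff'})$ only after cancelling any $v_i \otimes v_{c(i)}$ pairs against the symplectic form, which is exactly the content of the isomorphism $\bigoplus_n V^{\otimes 2n}(-n) \simeq \bigoplus_n V^{\vee,\otimes 2n}(n)$ used to define the algebra structure; I would cite \Cref{lemma: orthogonality differential forms} (or the symplectic normalization) for the statement that $v_i, v_{c(i)}$ pair to a nonzero scalar, so that the product line is nonzero and equals $V_F(I_{ff'})$. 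Granting this, $\langle W'_F\rangle$ contains $V_F(I_f)$ for every $f \in \Lambda_0$, hence equals $W_F$.

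The main obstacle is the bookkeeping in the previous paragraph: making precise the dictionary between (i) the cup-product algebra structure on $\bigoplus_n V^{\otimes 2n}(-n)$, (ii) concatenation and reduction of index tuples modulo symplectic-pair cancellation, and (iii) the group law on the character lattice of the torus $(T_E)_F$, and then verifying that no product of the generating lines collapses to zero for a sign or normalization reason. Once the identification of $W_F$ with the torus-invariants is in place and the multiplication is matched with lattice addition, both parts follow formally; the rest is linear algebra and the combinatorics of \Cref{definition: index tuple for MT equation}. I would therefore structure the write-up as: (a) $W_F$ = $\MT(A)_F$-invariants; (b) diagonalize, reduce to characters, invoke \Cref{lemma: MT action on tensor powers} to get part (1); (c) match algebra multiplication with the lattice, handle symplectic cancellation, deduce part (2).
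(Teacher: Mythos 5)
Your argument for part (1) is essentially the paper's proof: identify $W_F$ with the pointwise $\MT(A)_F$-invariants, diagonalize, and use \Cref{lemma: MT action on tensor powers} to see that a point $x$ acts on $V_F(I)$ by the associated Laurent monomial $f(x)$, so that stability is equivalent to $f-1$ lying in the defining ideal of $\MT(A)_F$. For part (2) the paper only says ``follows easily''; your more detailed matching of the product with addition in the character lattice, including the need to handle negative exponents (where the mechanism is really the polarization class $\sum_i v_i\otimes v_{c(i)}$ and its dual rather than a literal cancellation inside the tensor algebra), is a reasonable filling-in of that gap and is consistent with the paper's intent.
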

\begin{proof}
    Write the $2n$-tuple $I$ as the concatenation of two $n$-tuples $N,\, D$. 
    One associates with $I$ the monomial 
    \[ \textstyle f = \prod_{i \in N} x_i \cdot \left(\prod_{i \in D} x_{c(i)}\right)^{-1}. \]
    The condition $V_F(I) \subseteq W_F$ is equivalent to the fact that $V_F(I)$ is left pointwise stable by the action of $\MT(A)_F$ \cite[\S 4.4]{moonenIntro} or, equivalently, by the action of all geometric points $x \in \MT(A)_F(\bar{F})$. 
    Since by \Cref{lemma: MT action on tensor powers} the element $x$ acts on the $1$-dimensional vector space $V_F(I) \otimes_F \bar{F}$ as multiplication by $f(x)$, this is in turn equivalent to $f(x)=1$ for all $x \in \MT(A)_F(\bar{F})$. 
    This is also equivalent to the fact that $f-1$ lies in the ideal defining $\MT(A)_F$ as an algebraic subgroup of the diagonal torus.
    This proves (1).
    Part (2) follows easily.%
\end{proof}

\section{Computing the Galois action on de Rham homology}
Let $V^\dR_k = \HH^1_\dR(A/k)^\vee$.
We identify the 1-forms $\omega_i$ of assumption (\ref{assumption 4: eigenbasis omega}) with their image through the embedding $\HH^0(A, \Omega_A^1) \otimes_k kF \hookrightarrow \HH^1_{\dR}(A/kF)$.
Complete $\{\omega_i\}_{i=1,\ldots,g}$ to a basis $\{\omega_i\}_{i=1,\ldots,2g}$ of $\HH^1_{\dR}(A/kF)$ consisting of eigenvectors for the $E$-action and let $\{\omega_i^\vee\}_{i=1,\ldots,2g}$ be the dual basis of $V_k^{\dR}$. It is immediate to check that $\omega_i^\vee$ is an eigenform with the same character as $\omega_i$.

The Galois action on $V^\dR_{k}\otimes_k kF$ is the diagonal one: the tensor representation of $V_{\dR, k}$ with the trivial action and $kF$ with its natural Galois action. By assumption (\ref{assumption 2: all endomorphisms are defined over k}) we have
\begin{equation}
    \label{eq: Galois action on line}
    \alpha \cdot \tau(\omega_i^\vee) = \tau(\alpha \cdot \omega_i^\vee) = \tau\chi_i(\alpha) \cdot \tau(\omega_i)^\vee \quad \forall \alpha \in E, \, \forall i = 1, \dots, 2g.
\end{equation}
Define a $\Gal(kF/k)$-action on the index set $\{1, \dots, 2g\}$ by the formula $\tau\chi_i = \chi_{\tau(i)}$.

\begin{lemma}
\label{lemma: rescaling of the eigenbasis}
    Up to rescaling the $\omega_i^\vee$, we can assume that
    \[ \tau(\omega_i^\vee) = \omega_{\tau(i)}^\vee \qquad \forall \tau \in \Gal(kF/k). \]
\end{lemma}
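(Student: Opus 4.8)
The plan is to show that the basis $\{\omega_i^\vee\}$ can be normalised so that the Galois action on indices, which a priori is only known to hold up to scalars by \Cref{eq: Galois action on line}, becomes an honest permutation action. The starting observation is that \Cref{eq: Galois action on line} tells us $\tau(\omega_i^\vee)$ is an eigenvector for the $E$-action with character $\tau\chi_i = \chi_{\tau(i)}$; since the $\chi_j$ are distinct, the corresponding eigenspaces are one-dimensional, and hence there is a nonzero scalar $c(\tau, i) \in (kF)^\times$ (in fact in $F^\times$, since the $\omega_j$ are $F$-rational) with $\tau(\omega_i^\vee) = c(\tau,i)\,\omega_{\tau(i)}^\vee$. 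So the content of the lemma is that the cocycle $c$ can be trivialised by a diagonal rescaling $\omega_i^\vee \mapsto d_i\,\omega_i^\vee$.

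First I would record the cocycle relation satisfied by $c$. Applying $\sigma\tau$ to $\omega_i^\vee$ in two ways gives
\[
    c(\sigma\tau, i) = \sigma\bigl(c(\tau,i)\bigr)\cdot c(\sigma, \tau(i)),
\]
which exhibits $c$ as a $1$-cocycle for $\Gal(kF/k)$ valued in the induced/permutation module $\bigoplus_i (kF)^\times$, where $\Gal(kF/k)$ permutes the $2g$ factors according to $\tau\colon i \mapsto \tau(i)$. Decomposing this permutation module into orbits, each orbit contributes a factor of the form $\prod_{j \in \mathrm{orbit}} (kF)^\times$, which as a Galois module is $\Ind_{\Gal(kF/L)}^{\Gal(kF/k)} (kF)^\times$ for $L$ the stabiliser field of a chosen index in the orbit. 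By Shapiro's lemma, $\HH^1$ of such an induced module is $\HH^1(\Gal(kF/L), (kF)^\times)$, which vanishes by Hilbert's Theorem 90. Hence $c$ is a coboundary: there exist $d_i \in (kF)^\times$ with $c(\tau, i) = \sigma\text{-part}$… more precisely $c(\tau,i) = d_{\tau(i)}^{-1}\cdot \tau(d_i)$ (the exact index placement being dictated by the permutation action). Replacing $\omega_i^\vee$ by $d_i\,\omega_i^\vee$ then yields $\tau(\omega_i^\vee) = \omega_{\tau(i)}^\vee$ as claimed; one checks the rescaled forms are still an eigenbasis with the same characters, since scaling does not change eigen-characters.

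The main obstacle is purely bookkeeping: organising the permutation module into Galois orbits and getting the index conventions in the cocycle/coboundary formulas consistent, so that the Shapiro/Hilbert 90 vanishing genuinely applies orbit by orbit. One should also note two compatibility points. First, the rescaling must be done \emph{$\Gal(F/k)$-equivariantly over orbits} so that it does not spoil assumption (\ref{assumption 2: all endomorphisms are defined over k}) or the $E$-eigenbasis property; this is automatic because the $d_i$ are scalars. Second, since each $\omega_j$ lies in $\HH^0(A,\Omega^1_A)\otimes_k kF$ for $j \le g$ and the Galois action preserves the Hodge filtration, the permutation $\tau$ necessarily preserves the partition $\{1,\dots,g\}\sqcup\{g+1,\dots,2g\}$ up to the conjugation symmetry $\chi_{i+g}=\bar\chi_i$; this is consistent with, and in fact follows from, the orbit structure, so no extra argument is needed. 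Dualising back gives the corresponding statement $\tau(\omega_i) = \omega_{\tau(i)}$ for the forms themselves, which is the formulation used in the introduction.
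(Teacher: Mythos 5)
Your proposal is correct and follows essentially the same route as the paper: extract the diagonal cocycle $c(\tau,i)$ from the generalized-permutation action, decompose the permutation module $\bigoplus_i (kF)^\times$ into Galois orbits, and trivialise the cocycle via Shapiro's lemma plus Hilbert 90, then rescale. The only quibbles are cosmetic (the scalars live in $(kF)^\times$ rather than $F^\times$, and the coboundary formula's index placement is inverted, as you yourself flag), neither of which affects the argument.
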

\begin{proof}
    Let $L=kF$ and $G = \Gal(L/k)$.
    Every $\tau \in G$ acts on $V^\dR_k$ as a generalized permutation matrix in the basis $\{\omega_i^\vee\}$ by \Cref{eq: Galois action on line}. We can decompose the matrix representing the $\tau$-action as the product of a permutation matrix and a diagonal matrix with entries $D(\tau)$ (where $D$ is the left factor).
    Consider the map
    \[ G \to L^{\times 2g}, \quad \tau \mapsto D(\tau). \]
    Associativity of the Galois action translates into the cocycle condition
    \[ D(\sigma\tau) = D(\sigma)\cdot \sigma D(\tau) \qquad \forall \sigma, \, \tau \in G, \]
    where $\sigma$ acts on a vector $D$ by permuting its components and acting on the coefficients. We consider $L^{\times 2g}$ as a Galois module with this action, so that $D \in \HH^1(G, L^{\times 2g})$.
    Decompose the index set $\{1, \dots, 2g\}$ into 
    $G$-orbits; pick a representative $i$ in each orbit and let $H_i \leq G$ be its stabilizer. There is a $G$-module decomposition
    \[ \textstyle L^{\times 2g} = \oplus_i \Ind_{H_i}^G (L^\times). \]
    From Shapiro's Lemma and Hilbert 90, we deduce that
    \[ 
        \HH^1(G, \oplus_i \Ind_{H_i}^G L^\times)
        = \oplus_i \HH^1(G, \Ind_{H_i}^G L^\times) 
        = \oplus_i \HH^1(H_i, L^\times)
        = 0.
    \]
    It follows that $D$ is trivial in cohomology, hence there exists $x \in L^{\times2g}$ such that $D(\tau)$ is given by $\tau(x)/x$. The diagonal change of basis with entries $x^{-1}$ gives the desired result.
\end{proof}

Note that computing a basis as in \Cref{lemma: rescaling of the eigenbasis} is easy: since each $E$-eigenspace is $1$-dimensional, it suffices to rescale $\omega_i^\vee$ so that its first coefficient with respect to a fixed $k$-rational basis is $1$ (indeed, $\tau(\omega_i^\vee)$ is proportional to $\omega_{\tau(i)}^\vee$, and the fact that the first coefficient in a basis expansion is $1$ for both implies that they are equal).

From this point on, we assume that our basis $\{\omega_i^\vee\}$ is normalized as in \Cref{lemma: rescaling of the eigenbasis}.

\begin{definition}
    {\label{definition: differential forms line decomposition}}
    Let $I = (i_1, \dots, i_{2n})$ be a $2n$-tuple as in \Cref{definition: eigenspaces for the endomorphism action}.
    The Galois action on the index set $\{1, \dots, 2g\}$ induces an action on $2n$-tuples by setting $\tau I \defeq (\tau i_1, \dots, \tau i_{2n})$.
    Write $V_F^{\dR} := V_k^{\dR} \otimes_k F$ and denote by $V^\dR_{F}(I) \subseteq V_{F}^{\dR, \otimes 2n}(-n)$ the $F$-line generated by 
    \[ \textstyle \omega_I^\vee \defeq \bigotimes_{i \in I} \omega_i^\vee. \]
    Let $\omega_f^\vee \defeq \omega_{I_f}^\vee$.
\end{definition}

It follows from \Cref{lemma: rescaling of the eigenbasis} that $\tau(\omega_I^\vee) = \omega_{\tau I}^\vee$ and $\tau V_{F}^\dR(I) = V_{F}^\dR(\tau I)$. %

\begin{definition}
\label{definition: big eigenspace}
    Let $I$ be a $2n$-tuple and denote by $[I]$ the orbit of $I$ under the Galois action. Set
    \[
    V_F[I] = \sum_{J \in [I]} V_F(J) = \sum_{\tau \in \Gal(F/k)} V_F(\tau I).
    \]
Notice that $V_F[I]$ is a finite-dimensional vector space and that $\tau V_F[I] = V_F[I]$. It follows that $V_F[I]$ is the $F$-span of the rational vector space $V[I] = V_F[I] \cap (V \otimes_\Q 1)$.
We use the notation $V_{k}^\dR[I]$ for the analogous subspaces of $V_{k}^\dR$.
\end{definition}

\section{Computing the Galois action on Hodge cycles}
Absolute Hodge cycles were introduced by Deligne in \cite[Section 2]{Deligne} as classes in the total cohomology space $\HH_{\mathbb{A}}^\bullet$, defined formally as the product of the de Rham cohomology algebra $\HH_{\dR}^\bullet$ and the étale $\ell$-adic cohomology alegbra for all primes $\ell$.
By definition, an absolute Hodge cycle $\gamma$ on a projective variety $X$ has a de Rham component $\gamma_{\dR} \in \HH^{2n}_{\dR}(X/k)(n)$ and a component in each $\ell$-adic étale cohomology group, $\gamma_\ell \in \HH^{2n}_{\text{ét}}(X_{\bar{k}}, \Q_\ell)(n)$. Components are required to be compatible through the various comparison isomorphisms. The space of absolute Hodge cycles is a $\Q$-vector space $C_{\operatorname{AH}} \subseteq \HH^\bullet_{\mathbb{A}}$.

We now go back to working with our CM abelian variety $A$.
For consistency with the previous sections, we work with the dual space $C_{\AH}^\vee$ of absolute Hodge cycles.
The $E^{2n}$-action on the different cohomology groups is compatible with the comparison isomorphisms, so $C_{\AH}^\vee \otimes_\Q F$ decomposes into generalized eigenspaces $C_F[I]$ as in \Cref{definition: big eigenspace}. Notice that $C_F[I]$ can be trivial, even when $V_F[I]$ is not, as the latter might not contain any Hodge class.

\begin{proposition}
    \label{prop: W is the etale realisation of CAH}
    Let $W$ be the algebra of Hodge classes. Fix an embedding $k \hookrightarrow \C$.
    The inclusion $W \to C_{\AH}^\vee$ induced by the embedding is an isomorphism.
    In particular, $C_F[I]$ is nontrivial if and only if $I=I_f$ for an equation $f \in \mathcal{F}$.
\end{proposition}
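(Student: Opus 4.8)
The one substantial ingredient is Deligne's theorem that every Hodge cycle on an abelian variety is absolutely Hodge \cite{Deligne}; with this in hand the proposition is essentially a matter of unwinding definitions, so I would organize the argument around it. Fixing the embedding $k\hookrightarrow\C$, the de Rham and $\ell$-adic comparison isomorphisms identify every realization of a cohomology (equivalently homology) class with its Betti realization; applying this on the various powers $A^m$ and assembling via the Künneth formula and the polarization identification $V\simeq V^\vee(1)$ used in \Cref{eq: algebra of Hodge classes}, one identifies $C_{\AH}^\vee$ with the subspace of $\bigoplus_{n}V^{\otimes 2n}(-n)$ consisting of those Betti classes that are in addition of Hodge type $(0,0)$ --- that is, with $W$. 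In one direction this is tautological: an absolute Hodge cycle has a Hodge Betti realization by definition and is determined by it, so there is a natural injection $C_{\AH}^\vee\hookrightarrow W$. Its surjectivity --- equivalently, the well-definedness of the inverse map $W\to C_{\AH}^\vee$ --- is exactly the assertion that every Hodge class, transported through the comparison isomorphisms, acquires de Rham and $\ell$-adic components satisfying the defining compatibilities of an absolute Hodge cycle, and this is Deligne's theorem applied to the varieties $A^m$, together with the compatibility of absolute Hodge cycles with tensor constructions. This yields the isomorphism $W\simeq C_{\AH}^\vee$ (and transports to $W$ the Galois action coming from the de Rham and étale sides).

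For the ``in particular'', the first point is that this isomorphism is equivariant for the action of the CM algebra on tensor powers: since the chosen $E\subseteq\End(A)$ consists of endomorphisms defined over $k$ by \Cref{assumption 2: all endomorphisms are defined over k}, $E^{\otimes 2n}$ acts on all realizations compatibly with the comparison isomorphisms, so $W\simeq C_{\AH}^\vee$ respects the generalized eigenspace decomposition of \Cref{definition: big eigenspace}. Hence $C_F[I]$ is nontrivial if and only if the corresponding $[I]$-eigenspace of $W_F$ is; writing $W_F$ in weight $n$ as a sum of the $E^{\otimes 2n}$-eigenlines $V_F(J)$ of $V^{\otimes 2n}(-n)\otimes F$, this happens exactly when $V_F(J)\subseteq W_F$ for some $J$ in the Galois orbit of $I$, and by \Cref{prop: W is generate by VIf}(1) the latter is equivalent to $J=I_f$ for some Laurent monomial $f$ that is an equation for $\MT(A)$.

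It then remains to observe that the set of such $f$ is stable under $\Gal(F/k)$ and that $I_{\tau f}=\tau I_f$. For the first claim, $\MT(A)$ is defined over $\Q$, so the ideal cutting out $\MT(A)_F$ inside the split torus $(T_E)_F$ --- on which $\tau$ acts semilinearly, permuting the coordinates by $x_i\mapsto x_{\tau(i)}$ as in the discussion preceding \Cref{definition: periods} --- is Galois-stable; thus $f-1$ lies in it if and only if $\tau f-1=\prod_i x_{\tau(i)}^{e_i}-1$ does. For the second, $\tau$ commutes with the involution $c$, because $\chi_{c(i)}=\overline{\chi_i}$ and complex conjugation of the $\chi_i$ is induced by the CM involution of $E$, hence commutes with the Galois action; so $I_{\tau f}=\tau I_f$. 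Putting these together, if $J=I_f$ lies in the orbit of $I$, say $J=\tau I$, then $I=I_{\tau^{-1}f}$ with $\tau^{-1}f$ again an equation, and conversely; hence $C_F[I]\neq 0$ precisely when $I=I_f$ for an equation $f$ of $\MT(A)$ (equivalently, for $f$ in the subgroup generated by the finite generating set $\mathcal F$). The only step carrying real content is the well-definedness of $W\to C_{\AH}^\vee$, i.e. the appeal to Deligne's theorem uniformly over the powers of $A$; everything else is bookkeeping with the comparison isomorphisms and with the Galois action on the coordinates $x_i$.
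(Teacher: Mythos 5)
Your proof is correct and follows the same route as the paper's (which is only two sentences long): the isomorphism $W \simeq C_{\AH}^\vee$ is exactly Deligne's theorem that Hodge classes on abelian varieties are absolutely Hodge, and the ``in particular'' is reduced to \Cref{prop: W is generate by VIf}. The additional bookkeeping you supply — the tautological injectivity of $C_{\AH}^\vee \hookrightarrow W$, the $E^{\otimes 2n}$-equivariance of the comparison isomorphisms, and the Galois-stability of the set of monomial equations with $I_{\tau f}=\tau I_f$ — is all consistent with what the paper records elsewhere (e.g.\ the remark following \Cref{corollary: eigenspace decomposition of CAH}).
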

\begin{proof}
    For abelian varieties, Hodge classes are absolute Hodge \cite[Main Theorem 2.11]{Deligne}. %
    The second part of the statement follows from \Cref{prop: W is generate by VIf}.
\end{proof}

Fix an equation $f \in \mathcal{F}$.
The space $C_F[I_f]$ is Galois stable, hence the $F$-span of the rational subspace $C[I_f] \subseteq C_F[I_f]$ of absolute Hodge cycles.

\begin{corollary}
\label{corollary: eigenspace decomposition of CAH}
    The $\Q$-algebra $C_{\AH}^\vee$ is generated by the finite-dimensional $\Q$-vector space $\sum_{f \in \mathcal{F}} C[I_f]$.
\end{corollary}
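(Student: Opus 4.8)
The plan is to combine the two structural facts already established: that $C_{\AH}^\vee \cong W$ as $\Q$-algebras (\Cref{prop: W is the etale realisation of CAH}), and that the Hodge algebra $W_F$ is generated by the lines $V_F(I_f)$ for $f$ ranging over a finite generating set $\mathcal{F}$ of equations for $\MT(A)$ (\Cref{prop: W is generate by VIf}(2)). The only genuine point to address is that generation of $W_F$ over $F$ by the $F$-subspaces $V_F(I_f)$ must be descended to generation of $W$ over $\Q$ by the $\Q$-subspaces $C[I_f]$.

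First I would record the translation of notation: under the comparison isomorphism $W \cong C_{\AH}^\vee$ of \Cref{prop: W is the etale realisation of CAH}, the generalized eigenspace decomposition of $C_{\AH}^\vee \otimes_\Q F$ into the spaces $C_F[I]$ matches the eigenspace decomposition of $W_F$, and under this identification $C_F[I_f]$ corresponds to $V_F[I_f] \cap W_F$, which by \Cref{prop: W is generate by VIf}(1) is exactly $V_F(I_f)$ (a single line, since $f$ is an equation for $\MT(A)$). Next, I would invoke \Cref{definition: big eigenspace}: each $C_F[I_f]$ is Galois-stable and is therefore the $F$-span of its rational part $C[I_f] := C_F[I_f] \cap (C_{\AH}^\vee \otimes_\Q 1)$ — this is the remark made immediately before the statement of the corollary. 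Hence the $F$-subalgebra of $W_F$ generated by $\{V_F(I_f) : f \in \mathcal{F}\}$ equals the $F$-subalgebra generated by $\{C[I_f] \otimes_\Q F : f \in \mathcal{F}\}$, which in turn is $F \otimes_\Q \big(\text{the }\Q\text{-subalgebra of }C_{\AH}^\vee\text{ generated by }\sum_{f} C[I_f]\big)$, since forming the subalgebra generated by a $\Q$-subspace commutes with the flat base change $-\otimes_\Q F$.

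Putting this together: \Cref{prop: W is generate by VIf}(2) says the left-hand $F$-subalgebra is all of $W_F \cong C_{\AH}^\vee \otimes_\Q F$. Therefore $F \otimes_\Q \langle \sum_f C[I_f]\rangle_{\Q\text{-alg}} = F \otimes_\Q C_{\AH}^\vee$, and by faithful flatness of $F/\Q$ we conclude $\langle \sum_f C[I_f]\rangle_{\Q\text{-alg}} = C_{\AH}^\vee$, i.e. the finite-dimensional $\Q$-vector space $\sum_{f \in \mathcal{F}} C[I_f]$ generates $C_{\AH}^\vee$ as a $\Q$-algebra. Finite-dimensionality is clear since $\mathcal{F}$ is finite and each $C[I_f]$ is a $\Q$-line (its $F$-span $C_F[I_f]$ is one-dimensional).

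I do not expect a serious obstacle here; the content is entirely bookkeeping around the identification $W \cong C_{\AH}^\vee$ and the descent of generation along $\Q \subseteq F$. The one place to be slightly careful is the claim that $C_F[I_f]$ is spanned by its rational points — but this is exactly the Galois-stability observation in \Cref{definition: big eigenspace} applied to $I_f$ (whose Galois orbit $[I_f]$ consists of tuples $I_{\tau f}$, each again an equation-index-tuple for $\MT(A)$ since $\MT(A)$ is defined over $\Q$ and hence Galois-stable), together with the standard fact that a Galois-stable $F$-subspace of a $\Q$-vector space extended to $F$ is defined over $\Q$.
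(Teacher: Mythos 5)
Your argument is essentially the paper's: the proof in the text is a one-line deduction from \Cref{prop: W is the etale realisation of CAH} and \Cref{prop: W is generate by VIf}(2), and your write-up simply makes explicit the descent from $F$ to $\Q$ via Galois stability and faithful flatness, which is exactly the intended bookkeeping.

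One slip worth correcting: $C_F[I_f]$ is \emph{not} the single line $V_F(I_f)$, and $C[I_f]$ is not a $\Q$-line. By \Cref{definition: big eigenspace}, $C_F[I_f]$ is the sum $\sum_{J\in[I_f]}$ of the eigenlines over the whole Galois orbit of $I_f$ (each $J=\tau I_f$ equals $I_{\tau f}$ for another equation $\tau f$, so each such line does lie in $W_F$ by \Cref{prop: W is generate by VIf}(1)); its dimension is the orbit size, not $1$. This does not damage your proof --- the $F$-span of $C[I_f]$ then \emph{contains} $V_F(I_f)$, so the subalgebra it generates still contains, hence equals, $W_F$, and the faithful-flatness descent goes through unchanged --- but the parenthetical identifications should be fixed.
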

\begin{proof}
    Follows from the isomorphism in \Cref{prop: W is the etale realisation of CAH} and \Cref{prop: W is generate by VIf}(2).
\end{proof}

\begin{remark}
    Note that if $f$ is an equation for $\MT(A)$ in the sense of \Cref{eq: eqs for MT}, then there exists an equation $f'$ such that $\tau I_f = I_{f'}$. Indeed, since $f$ is an equation for $\MT(A)$, the space $C_F(I_f)$ consists of absolute Hodge classes;
    absolute Hodge classes are stable under the Galois action by definition, so $\tau(C_F(I_f)) = C_F(\tau I_f)$ is also nontrivial
    , which -- using \Cref{prop: W is the etale realisation of CAH} -- implies that $\tau I_f = I_{f'}$ for some equation $f'$ for $\MT(A)$. We set $\tau f = f'$.
\end{remark}

The principal advantage of working with absolute Hodge cycles is that the action of the Galois group 
$\Gal(\bar k/k)$ can be computed on the de Rham realization, where it can be described in terms of the period integrals $P(\sigma, \omega)$ (see \Cref{definition: periods}).
\begin{proposition}
    \label{proposition: CI is Galois stable iff period condition}
    Fix an equation $f \in \mathcal{F}$ of weight $n$. Let $\lambda$ be a topological cycle that generates $V = \HH_1^B(A(\C), \Q)$ as a free $E$-module \cite[Proposition 3.6(c)]{MilneCM}. Let $\sigma \in \HH^B_{2n}(A^{2n}(\C), \Q)$ be the class corresponding to $\lambda^{\otimes 2n} $ via the {K\"unneth} product formula. The period $P(\sigma, \omega_J)$ is algebraic for all $J \in [I_f]$.
    The space $C[I_f]$ is fixed pointwise by the Galois action if and only if
    \begin{equation}\label{eq: period conditions}
    \tau P(\sigma, \omega_f) = P(\sigma, \omega_{\tau f}) \qquad \forall \tau \in \Gal(\bar k/k).        
    \end{equation}
    Here, we consider $\omega_f \in \HH^1_\dR(A/kF)^{\otimes 2n}$ as a differential form on $A^{2n}$ via K\"unneth's formula. %
\end{proposition}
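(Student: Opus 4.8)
The plan is to reduce the three claims to a computation in the $E$-eigenbasis and then read off the Galois action from the de Rham realization.

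First I would set up the periods. Every $J\in[I_f]$ equals $I_{f'}$ for some equation $f'$ in the Galois orbit of $f$ (the Remark following \Cref{corollary: eigenspace decomposition of CAH}), so it is enough to treat $J=I_{f'}$. Since $\lambda$ generates $V$ as a free $E$-module its components along the eigenlines $Fv_i\subseteq V_F$ are all nonzero; rescaling the $v_i$ — harmless, as they only serve to mark out eigenlines — I may assume $\lambda\otimes1=\sum_{i=1}^{2g}v_i$. The comparison isomorphism $\HH_1^B(A(\C),\Q)\otimes_\Q\C\xrightarrow{\ \sim\ }\HH^1_\dR(A/kF)^\vee\otimes_{kF}\C$ is $E$-equivariant, and $v_i$ and $\omega_i^\vee$ carry the same character $\chi_i$, so it is diagonal in these bases: $v_i\otimes1=\pi_i\,\omega_i^\vee$ with $\pi_i:=\int_\lambda\omega_i\in\C^\times$ (cross terms vanish because $\chi_i\neq\chi_j$ for $i\neq j$). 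By K\"unneth the integral over $\lambda^{\otimes2n}$ factors, whence for any $2n$-tuple $J$
\[
  P(\sigma,\omega_J)=\frac{1}{(2\pi i)^n}\int_{\lambda^{\otimes 2n}}\bigotimes_{i\in J}\omega_i=\frac{1}{(2\pi i)^n}\prod_{i\in J}\pi_i .
\]

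Next I would prove algebraicity and pin down the de Rham realization of the relevant absolute Hodge cycles. Fix $f'$ in the orbit of $f$. As $f'$ is an equation for $\MT(A)$, the line $C_F(I_{f'})\subseteq C_{\AH}^\vee\otimes_\Q F$ is nonzero (\Cref{prop: W is the etale realisation of CAH}); let $\eta_{f'}$ be its generator whose Betti realization is $v_{I_{f'}}=\bigotimes_{i\in I_{f'}}v_i\otimes(2\pi i)^{-n}$. Its de Rham realization lies in the line $V^\dR_F(I_{f'})=F\,\omega_{f'}^\vee$, say $\dR(\eta_{f'})=a_{f'}\,\omega_{f'}^\vee$ with $a_{f'}\in\Qbar^\times$ (the cycle is defined over $\Qbar$). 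Since the de Rham and Betti realizations of an absolute Hodge cycle correspond under the period comparison isomorphism, tracing that isomorphism through the eigenbases exactly as above — now on each tensor factor of $v_{I_{f'}}$ — forces
\[
  a_{f'}=\frac{1}{(2\pi i)^n}\prod_{i\in I_{f'}}\pi_i=P(\sigma,\omega_{f'}) .
\]
In particular every $P(\sigma,\omega_J)$, $J\in[I_f]$, is algebraic, which is the first assertion, and $\dR(\eta_{f'})=P(\sigma,\omega_{f'})\,\omega_{f'}^\vee$.

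Finally I would transport the Galois action to de Rham and conclude. By Deligne's formalism \cite{Deligne} the arithmetic action of $\Gal(\bar k/k)$ on $C_{\AH}^\vee$ is computed on the de Rham realization: after extending scalars it is an injective $\Qbar$-linear map $C_{\AH}^\vee\otimes_\Q\Qbar\to\HH^\bullet_\dR(A^\bullet/k)\otimes_k\Qbar$ intertwining that action with the natural $\Gal(\bar k/k)$-action on the second tensor factor; the invariants of the latter are $\HH^\bullet_\dR(A^\bullet/k)$, and on the eigenlines $\tau$ acts by $\omega_{f'}^\vee\mapsto\omega_{\tau f'}^\vee$ (\Cref{lemma: rescaling of the eigenbasis}, extended to tuples). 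Hence a class of $C[I_f]$ is Galois-fixed if and only if its de Rham realization is $k$-rational, so $C[I_f]$ is fixed pointwise if and only if $\dR(C[I_f])\subseteq V^\dR_k[I_f]$. Plugging $\dR(\eta_f)=P(\sigma,\omega_f)\,\omega_f^\vee$ into the equivariance of $\dR$ gives
\[
  \tau\cdot\eta_f=\frac{\tau\big(P(\sigma,\omega_f)\big)}{P(\sigma,\omega_{\tau f})}\,\eta_{\tau f}\qquad(\tau\in\Gal(\bar k/k)),
\]
and, $\{\eta_{f'}\}_{f'\in[f]}$ being an $F$-basis of $C_F[I_f]=C[I_f]\otimes_\Q F$, one checks that the induced representation of $\Gal(\bar k/k)$ on $C[I_f]$ is trivial — equivalently $\dR(C[I_f])\subseteq V^\dR_k[I_f]$ — precisely when all these scalars are $1$, i.e. when $\tau P(\sigma,\omega_f)=P(\sigma,\omega_{\tau f})$ for all $\tau$.

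The hard part will be this last step: the de Rham realization is $\Gal(\bar k/k)$-equivariant only after base change to $\Qbar$, and I would need to check carefully that "$C[I_f]$ pointwise Galois-fixed" is equivalent to exactly the displayed period identity, and not to a strictly weaker cocycle condition — this relies on the Betti-normalized $\eta_{f'}$ and the forms $\omega_{f'}^\vee$ of \Cref{lemma: rescaling of the eigenbasis} being simultaneously adapted to the relevant $\Q$- and $k$-rational structures.
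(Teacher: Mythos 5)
Your overall strategy is the one the paper follows: diagonalize the singular-to-de Rham comparison isomorphism in the two eigenbases so that the de Rham component of the absolute Hodge cycle supported on $V_F(I_{f'})$ is $P(\sigma,\omega_{f'})\,\omega_{f'}^\vee$ (whence algebraicity, since de Rham components of absolute Hodge cycles are $\Qbar$-rational), and then read the Galois action off the de Rham realization via \Cref{lemma: rescaling of the eigenbasis}. The normalization $\lambda\otimes 1=\sum_i v_i$, the computation of $P(\sigma,\omega_J)$ as a product of eigen-periods, and the identification $a_{f'}=P(\sigma,\omega_{f'})$ are all correct and match the paper's computation of $\operatorname{proj}_{[I]}b(\sigma)$.

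The gap is exactly where you flagged it, and it is a genuine one rather than a formality. First, when all your scalars equal $1$ the semilinear action sends $\eta_{f'}\mapsto\eta_{\tau f'}$: it \emph{permutes} your basis rather than fixing it, so you cannot conclude that the $\Q$-rational space $C[I_f]$ is fixed pointwise until you identify that rational structure in the basis $\{\eta_{f'}\}$ and check that a rational element $\sum_{f'}a_{f'}\eta_{f'}$ has coefficients satisfying $\tau(a_{f'})=a_{\tau f'}$. Second, your displayed criterion involves only $f$, whereas triviality of the action a priori requires $\tau P(\sigma,\omega_{f'})=P(\sigma,\omega_{\tau f'})$ for \emph{every} $f'$ in the orbit $[f]$. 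The paper closes both holes with one device you omit: it lets $\sigma$ run over its $E^{2n}$-orbit, observing that the resulting rational cycles $b^{-1}\proj_{[I]}b(\alpha\sigma)=\sum_{f'}\chi_{I_{f'}}(\alpha)\,\eta_{f'}$ span $C[I_f]$ over $\Q$ and that their coefficients satisfy $\tau(\chi_{I_{f'}}(\alpha))=\chi_{I_{\tau f'}}(\alpha)$ — precisely the descent datum needed for the equivalence with ``all scalars $=1$'' — and then reduces the orbit-wide condition to the single condition for $f$ via $\tau P(\sigma,\omega_{\gamma f})=\tau\gamma P(\sigma,\omega_f)=P(\sigma,\omega_{\tau\gamma f})$ for $J=\gamma I_f$. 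Both steps are short, but without them the ``if'' direction of your equivalence is unproven.
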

\begin{proof}
Let $I=I_f$ for ease of notation.
Through the singular-to-de Rham isomorphism
\[
    b \colon V^{\otimes 2n}\hookrightarrow V^{\otimes 2n} \otimes_\Q \C(-n) \simeq V_{F}^{\dR, \otimes 2n} \otimes_F \C(-n),
\]
the image of the cycle $\sigma$ has $[I]$-component
\[ \operatorname{proj}_{[I]}b(\sigma) \defeq
    {\sum_{J \in [I]} \omega_{J}^\vee \otimes P(\sigma, \omega_{J}) } \in V^\dR_F[I] \otimes_F \C. \]
Since $V^\dR_{\C}[I] \simeq C[I] \otimes_\Q \C$ is generated by (the de Rham components of) absolute Hodge cycles and $\operatorname{proj}_{[I]}b(\sigma)$ is rational, $\operatorname{proj}_{[I]}b(\sigma) \in C[I]$ must be (the de Rham component of) an absolute Hodge cycle.
Notice that $P(\sigma, \omega_J) = P(b^{-1} \operatorname{proj}_{[I]}b(\sigma), \omega_J)$. Since $b^{-1} \operatorname{proj}_{[I]}b(\sigma)$ maps to absolute Hodge cycle, it follows from \cite[Proposition 7.1]{Deligne} that $P(\sigma, \omega_J)$ is an algebraic number for every $J \in [I_f]$.
The Galois action on $V_F^{\dR} \otimes_F {\bar k}$ is diagonal. By \Cref{lemma: rescaling of the eigenbasis} we get
\[ \tau(\operatorname{proj}_{[I]}(b\sigma))
    = \sum_{J \in [I]} \omega_{\tau J}^\vee \otimes \tau P(\sigma, \omega_J) \]
and therefore $\tau (\operatorname{proj}_{[I]}(b\sigma)) = \operatorname{proj}_{[I]}(b\sigma)$ if and only if
\begin{equation}
    {\label{eq: shifting condition for all J}}
    \tau P(\sigma, \omega_J) = P(\sigma, \omega_{\tau J}) \qquad \forall J \in [I].
\end{equation}

By letting $\sigma$ vary in its $E^{2n}$-orbit, we obtain a basis of $V^{\otimes}$ and therefore
the elements $\proj_{[I]}(b\sigma)$ form a $\Q$-generating set for the vector space $C[I]$.  %
We claim that if \Cref{eq: shifting condition for all J} holds, it also holds replacing $\sigma$ with $\sigma' = \alpha\cdot \sigma$ for any $\alpha=(\alpha_1, \dots, \alpha_{2n}) \in E^{2n}$.
The coefficient of $\omega_{\tau J}^\vee$ in $\operatorname{proj}_{[I]}(\sigma')$ is %
\[
\begin{aligned}
P(\sigma', \omega_{\tau J}) & = \chi_{\tau j_1}(\alpha_1) \cdots \chi_{\tau j_{2n}}(\alpha_{2n}) P(\sigma, \omega_{\tau J}) \\ & = (\tau \chi_{j_1})(\alpha_1) \cdots (\tau\chi_{j_{2n}})(\alpha_{2n}) P(\sigma, \omega_{\tau J}) \\
& =  \tau\left( \chi_{j_1}(\alpha_1) \cdots \chi_{j_{2n}}(\alpha_{2n}) \right) P(\sigma, \omega_{\tau J}).
\end{aligned}
\]
by the change of variables formula and the fact that $\omega_{\tau J}$ is an eigenvector of character $\chi_{\tau j_1} \otimes \cdots \otimes \chi_{\tau j_{2n}}$. The claim is now easily checked. It follows that $C[I]$ is fixed by the Galois action if and only if \Cref{eq: shifting condition for all J} holds for the single cycle $\sigma$.

One implication is now clear: if $C[I_f]$ is fixed pointwise, then \eqref{eq: shifting condition for all J} must in particular hold for $J=I$ and all $\tau \in \Gal(\bar{k}/k)$, which gives \eqref{eq: period conditions} -- note that $\omega_I = \omega_f$ by definition. For the other, suppose that $\tau P(\sigma, \omega_f) = P(\sigma, \omega_{\tau f})$ for all $\tau \in \Gal(\bar k /k)$. For any $J \in [I_f]$ there is a $\gamma \in \Gal(\bar k/k)$ such that $J =\gamma I_f$. Then
\[ \tau P(\sigma, \omega_{J}) = \tau P(\sigma, \omega_{\gamma f}) = \tau \gamma P(\sigma, \omega_{f}) = P(\sigma, \omega_{\tau \gamma f}) = P(\sigma, \omega_{\tau J}), \]
where we applied our assumption with $\gamma$ and $\tau\gamma$ in the two middle equalities.
\end{proof}

\begin{remark}
\label{remark: if k acts trivially on character then old statement}
    If $k \supseteq F$, then the Galois action on the characters $\chi_i$ is trivial. Thus, $\tau f = f$ for all equations $f \in \mathcal{F}$ and all Galois elements $\tau$, and $k(\varepsilon_A)/k(\End A)$ is generated by the algebraic numbers $P(\sigma, \omega_f)$, as $f$ varies in $\mathcal{F}$. %
\end{remark}

\begin{remark}
\label{remark: Galois action without assumption 2}
    Note that the formulas in the proof of \Cref{proposition: CI is Galois stable iff period condition} give in particular an explicit description of the action of $\operatorname{Gal}(\bar{k}/k)$ on $C[I_f]$.
    Without assumption (\ref{assumption 2: all endomorphisms are defined over k}), the same computations are still possible, and just slightly different from the ones presented. Indeed, the Galois group acts on the characters $\chi_i$ not via post-composition, but rather via
    \begin{equation}\label{eq: more general action on characters}
    (\tau \cdot \chi_i)(\alpha) := \tau(\chi_i(\tau^{-1}\alpha)) \quad \forall \alpha \in E,
    \end{equation}
    where $\tau^{-1} \alpha$ denotes the action of $\tau^{-1}$ on $\alpha$ seen as an element of $\operatorname{End}(A_{\bar{k}})$.
    Notice that \eqref{eq: more general action on characters} restricts to the action $(\tau \cdot \chi_i)(\alpha) = \tau(\chi(\alpha))$ for $\tau \in \operatorname{Gal}(\bar{k}/k(\End E))$.
\end{remark}

\begin{remark}
\label{remark: if k is Q then Kconn is generated by periods}
    For an abelian variety defined over $k = \Q$ with (potential) complex multiplication, we always have $\Q(\End A) \supseteq F$. Indeed, on the one hand we have the containment $\Q(\End A) \supseteq E^\ast$ \cite[Propositon 1.21]{MilneCM}, and on the other, the extension $k(\End A)/k = \Q(\End A)/\Q$ must be Galois, which implies that $\Q(\End A)$ contains the Galois closure of $E^\ast$, which we can take to be our $F$.
    It follows that \Cref{remark: if k acts trivially on character then old statement} always applies.
\end{remark}

\section{Computing the Galois action on Tate classes}
Fix an auxiliary prime $\ell$. %
Consider the canonical étale-to-singular comparison isomorphism \cite[Chapter III, Theorem 3.12]{Milne_etalecohomology}
\begin{equation}\label{eq: Vell F and V F}
V_\ell \simeq \HH^1_{\etale}(A, \Q_{\ell})^\vee \simeq [\HH^1_{B}(A, \Q) \otimes_{\Q} \Q_{\ell}]^\vee \simeq V \otimes_\Q \Q_\ell. %
\end{equation}
Tate classes are elements in $\ell$-adic cohomology fixed by an open subgroup of the absolute Galois group. They form the arithmetic counterpart of Hodge classes and are conjecturally spanned by algebraic cycles, as predicted by the Tate conjecture. 
Denote by $W_\ell$ the algebra of Tate classes in $\bigoplus_n V_\ell^{\otimes n} \otimes V_\ell^{\vee, \otimes n} \simeq V_\ell^{\otimes 2n}(-n)$, described in \cite[\S 2]{gallese_part2}.

\begin{proposition}
\label{proposition: Well is the ladic realization of CAH}
    The $\ell$-adic realization map induces an isomorphism $C_{\AH}^\vee \otimes_\Q \Q_\ell \to W_\ell$ of Galois representations.
\end{proposition}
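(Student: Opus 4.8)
The plan is to show that the $\ell$-adic realization functor carries the algebra of absolute Hodge cycles $C_{\AH}^\vee$ isomorphically onto the algebra of Tate classes $W_\ell$, compatibly with the Galois action. First I would recall the precise definitions: an absolute Hodge cycle on $A^{2n}$ is a tuple consisting of a de Rham component and one $\ell$-adic étale component for every prime $\ell$, all matching under the comparison isomorphisms, whose de Rham component is rational (equivalently, lies in the appropriate $(n,n)$-piece after base change to $\C$). The $\ell$-adic realization is the projection $\gamma \mapsto \gamma_\ell$. By \Cref{prop: W is the etale realisation of CAH}, the natural map $W \to C_{\AH}^\vee$ is an isomorphism of $\Q$-algebras, where $W$ is the algebra of Hodge classes in $\bigoplus_n V^{\otimes 2n}(-n)$; so after tensoring with $\Q_\ell$ it suffices to compare $W \otimes_\Q \Q_\ell$ with $W_\ell$ inside $\bigoplus_n V_\ell^{\otimes 2n}(-n)$, using the comparison isomorphism \Cref{eq: Vell F and V F}.

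The key steps, in order, are as follows. (1) Observe that, since $A$ has complex multiplication, the Mumford--Tate conjecture holds for $A$ by \cite{MTconjMC}, so $\Gl^0 \simeq \MT(A)_{\Q_\ell}$ after the identification $V_\ell \simeq V \otimes_\Q \Q_\ell$. (2) By \Cref{prop: W is generate by VIf}, the algebra $W_F$ is spanned by the lines $V_F(I_f)$ for $f$ an equation of $\MT(A)$, and these are exactly the $1$-dimensional subspaces of $\bigoplus_n V_F^{\otimes 2n}(-n)$ that are pointwise fixed by $\MT(A)_F$; the identical computation with $V_\ell$ in place of $V$ shows that $W_\ell \otimes_{\Q_\ell} \overline{\Q_\ell}$ is spanned by the lines that are pointwise fixed by $\Gl^0 \simeq \MT(A)_{\Q_\ell}$, because a class in $\ell$-adic cohomology is Tate (i.e.~fixed by an open subgroup of $\Gal(\bar k/k)$) if and only if it is fixed by $\Gl^0(\Q_\ell)$, equivalently by $\Gl^0(\overline{\Q_\ell})$ since $\Gl^0$ is connected. (3) Under the comparison isomorphism, the $\MT$-action on $V \otimes \overline{\Q}$ and the $\Gl^0 \simeq \MT_{\Q_\ell}$-action on $V_\ell \otimes \overline{\Q_\ell}$ are given by the same algebraic group acting on the same $\Q$-vector space (extended to the respective algebraically closed fields), so their fixed lines in each tensor power correspond under \Cref{eq: Vell F and V F}; hence $W \otimes_\Q \Q_\ell$ and $W_\ell$ have the same base change to $\overline{\Q_\ell}$ inside $\bigoplus_n V_\ell^{\otimes 2n}(-n)$, and being both defined over $\Q_\ell$ they coincide. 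This gives the isomorphism of $\Q_\ell$-algebras $C_{\AH}^\vee \otimes_\Q \Q_\ell \xrightarrow{\sim} W_\ell$. (4) Finally, Galois equivariance: the comparison isomorphism \Cref{eq: Vell F and V F} is $\Gal(\bar k/k)$-equivariant only after we remember that the Galois action on the singular side is \emph{trivial} and the content is carried entirely by the comparison map; more precisely, the realization map $C_{\AH}^\vee \to W_\ell$ is Galois equivariant because absolute Hodge cycles are by construction stable under $\Gal(\bar k/k)$ and the realization functor is a functor of Galois representations, so the action on $\gamma_\ell$ is the restriction of the Galois action on the ambient $\ell$-adic cohomology — this is exactly the statement we want.

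Concretely I would phrase the proof as: by \Cref{prop: W is the etale realisation of CAH} we may replace $C_{\AH}^\vee$ by $W$ and must show $W \otimes_\Q \Q_\ell = W_\ell$ inside $\bigoplus_n V_\ell^{\otimes 2n}(-n)$ compatibly with Galois. Both sides become, after base change to $\overline{\Q_\ell}$, the span of the lines fixed by the image of $\MT(A)$ — for the left side by \Cref{prop: W is generate by VIf} and functoriality of $\MT$ under $\otimes \Q_\ell$, for the right side by the definition of Tate class together with the Mumford--Tate conjecture $\Gl^0 \simeq \MT(A)_{\Q_\ell}$ of \cite{MTconjMC} and the fact that $\Gl^0$ is connected (so an element of $\bigoplus_n V_\ell^{\otimes 2n}(-n)$ is fixed by an open subgroup of $\Gal(\bar k/k)$ precisely when it is fixed by $\Gl$, and a line is then fixed precisely when it is fixed by $\Gl^0$). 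Equality of the $\overline{\Q_\ell}$-spans forces equality of the $\Q_\ell$-subspaces. Galois equivariance is automatic from the fact that $C_{\AH}^\vee$ consists of Galois-stable tuples and $\gamma \mapsto \gamma_\ell$ is a morphism of Galois representations.

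The main obstacle I expect is the bookkeeping around Tate twists and the precise sense in which ``Tate class'' is characterized by $\Gl$-invariance: one must be careful that $W_\ell$ is defined as the $\Gl$-invariants (equivalently the classes fixed by some open subgroup of $\Gal(\bar k/k)$) in $\bigoplus_n V_\ell^{\otimes 2n}(-n)$ — this is where the reference \cite[\S 2]{gallese_part2} enters — and that under the comparison isomorphism the reductive group $\Gl$ and its identity component act through the \emph{same} algebraic group as $\MT(A)$ does on the Betti side; invoking \cite{MTconjMC} is what legitimizes this, and the rest is linear algebra over the relevant fields plus the observation that a $\Q_\ell$-subspace is determined by its $\overline{\Q_\ell}$-span.
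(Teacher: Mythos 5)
Your proposal is correct and follows essentially the same route as the paper: reduce to comparing $W\otimes_\Q\Q_\ell$ with $W_\ell$ via \Cref{prop: W is the etale realisation of CAH}, and identify them using the Mumford--Tate conjecture for CM abelian varieties. The paper simply asserts that the Mumford--Tate conjecture makes $W\otimes_\Q\Q_\ell\to W_\ell$ an isomorphism (after noting, via Deligne, that Hodge classes map to Tate classes), whereas you unpack that step by characterizing both sides as the spans of the lines fixed by $\MT(A)$, respectively $\Gl^0$ — a harmless elaboration of the same argument.
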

\begin{proof}
    Composing the realization map with the inclusion $W \to C_{\AH}^\vee$, we get by definition the singular-to-étale comparison map, induced on the product by the isomorphism in \Cref{eq: Vell F and V F}. The image of a Hodge class through the isomorphism is known to always be a Tate class \cite[Theorem 2.11]{Deligne}. Hence, the following diagram is commutative
    \begin{equation*}
    \begin{tikzcd}
        W \otimes_\Q \Q_\ell \rar\dar & C_{\AH}^\vee \otimes_\Q \Q_\ell \arrow[dl] \\
        W_\ell. & 
    \end{tikzcd}
    \end{equation*}
    The Mumford-Tate conjecture, which holds for abelian varieties with complex multiplication \cite{MTconjMC}, implies that the vertical map is an isomorphism. The horizontal map is an isomorphism by \Cref{prop: W is the etale realisation of CAH}. The statement follows. 
\end{proof}

Note that we have now established \Cref{corollary: eigenspace decomposition of CAH}, \Cref{proposition: CI is Galois stable iff period condition}, and \Cref{proposition: Well is the ladic realization of CAH}, which concludes the proof of \Cref{theorem: computation of kconn}.

\section{Computing the periods}
{\label{section: computing periods}}
With \Cref{theorem: computation of kconn} in hand, it only remains to compute the periods $P(\sigma, \omega_f)$ appearing in its statement.
Picking any $\lambda \in \HH_1^B(A, \Q)$ and $\sigma = \otimes_{i \in I_f} \lambda$, we can write the period as a product of integrals
\[
    \int_\sigma \omega_f = \prod_{i \in I_f} \int_{\lambda} \omega_i.
\]
When $A=J$ is a Jacobian, the method of \cite{molin:hal-02416012} can then be used to compute the periods $\int_{\lambda} \omega_i$ to arbitrary precision. Unfortunately, the available implementations only compute the \textit{holomorphic} periods (that is, those with $\omega_i \in \HH^0(J, \Omega^1)$), while we need to know the periods corresponding to a full basis of $\HH_\dR^1(X/F)$. 
Since accurate computation of periods is difficult to implement in practice, especially for anti-holomorphic differentials, in this section we present an alternative approach specific to the CM case. This method yields the quasi-periods (periods of anti-holomorphic forms) without requiring additional numerical computations beyond those already necessary to determine the holomorphic periods.

\subsection{Algebraic relations between periods and quasi-periods}
We prove the following result, which is a refined version of a result of Bertrand \cite[§8, p.~36, equation (3)]{MR702188}. For simplicity, we state it in the case of complex multiplication by a field (the isotypic case). In the general case, if $A \sim \prod A_i$ where the $A_i$ are isotypic, one can apply the same result to each isotypic component by choosing a basis of $\HH^0(A, \Omega^1)$ that is given by the union of bases of the various $\HH^0(A_j, \Omega^1)$.
\begin{theorem}
{\label{theorem: algebraic relation between period and quasiperiod}}
    Let $A$ be an abelian variety over a number field $k$, with complex multiplication by a CM field $E$ with maximal real subfield $E_0$, Galois closure $F$, and CM type $\Phi = \{\chi_1, \ldots, \chi_g\}$. %
    Write $E=E_0(\xi)$, where $\xi \in E$ satisfies $\xi^2 \in E_0$.
    Assume that all the endomorphisms of $A$ are defined over $k$ and that the imaginary part of $\chi_j(\xi)$ is negative for each $j=1,\ldots,g$. Let $\omega_{1}, \ldots, \omega_{g}$ be a basis of $\HH^0(A_{kF}, \Omega^1)$ such that each $\omega_i$ is an eigenform with character $\chi_i$ for the action of $E$. There exist
    \begin{enumerate}
        \item elements $\omega_{g+1}, \ldots, \omega_{2g} \in \HH^1_{\dR}(A/kF)$ such that $\omega_{g+j}$ is an eigenform with character $\bar{\chi}_j$ for all $j=1, \ldots, g$ %
        \item a non-zero cycle $\lambda \in \HH_1(A,\C)$
    \end{enumerate}
    such that for all $j=1, \, \ldots, \,g$ the following identity holds:
    \[
    P(\lambda, \omega_j)\cdot P(\lambda, \omega_{c(j)})= \pi i \cdot \chi_j(\xi).
    \]
\end{theorem}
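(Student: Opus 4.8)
The plan is to pass to the complex uniformisation of $A$, where the complex multiplication simultaneously diagonalises everything in sight, reducing the claim to a statement about a single product $\varpi_j\varpi_{g+j}$ of a period and a quasi-period; that product is then pinned down by the Riemann bilinear relations together with the Tate twist governing the Betti--de Rham comparison.

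Since $E$ is a field, $V=\HH_1^B(A(\C),\Q)$ is a free $E$-module of rank one; I would fix a generator $\lambda_0$, so $V=E\lambda_0$. The eigenform relations $\alpha^\ast\omega_j=\chi_j(\alpha)\omega_j$ give, for every $\alpha\in E$ (and more generally for $\alpha\in E\otimes_\Q\C$ acting on $\HH_1(A,\C)$),
\[ P(\alpha\lambda_0,\omega_j)=\int_{\alpha\lambda_0}\omega_j=\int_{\lambda_0}\alpha^\ast\omega_j=\chi_j(\alpha)\,\varpi_j,\qquad \varpi_j:=P(\lambda_0,\omega_j)\in\C^\times. \]
Because $F/\Q$ is Galois and contains $E$, the algebra $E\otimes_\Q kF$ is split, so $\HH^1_{\dR}(A/kF)$ decomposes over $kF$ into $2g$ one-dimensional eigenlines (the multiplicities being $1$ since $V\otimes_\Q\C$ is free of rank one over $E\otimes_\Q\C$ and the comparison isomorphism is $E$-equivariant); I would \emph{define} $\omega_{g+j}\in\HH^1_{\dR}(A/kF)$, for $1\le j\le g$, to be a $kF$-generator of the $\bar\chi_j$-eigenline. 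Under the comparison isomorphism this line is spanned by $\overline{[\omega_j]}$ (complex conjugation sends $[\omega_j]$, of character $\chi_j$, to a class of character $\bar\chi_j$, as $\alpha^\ast$ commutes with conjugation), so $\omega_{g+j}$ is a nonzero complex multiple of $\overline{[\omega_j]}$; with $\varpi_{g+j}:=P(\lambda_0,\omega_{g+j})\in\C^\times$ one then has $P(\alpha\lambda_0,\omega_j)\,P(\alpha\lambda_0,\omega_{g+j})=\chi_j(\alpha)\chi_{g+j}(\alpha)\,\varpi_j\varpi_{g+j}$.

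The heart of the argument is to prove $\varpi_j\varpi_{g+j}\in\pi i\,\chi_j(\xi)\cdot(kF)^\times$ for each $j\le g$. The alternating form $\psi_\xi(x,y)=\operatorname{Tr}_{E/\Q}(\xi\,x\bar y)$ on $V$ is $J$-invariant and definite up to sign (the latter being exactly the hypothesis $\operatorname{Im}\chi_j(\xi)<0$), so $\pm[\psi_\xi]$ is, up to a positive rational scalar, the class of a polarization of $A$ defined over $k$ (one uses here that $\xi\in\operatorname{End}^0(A)$ is defined over $k$). Cupping with the $(g-1)$-st power of the algebraic de Rham class of that polarization and applying the trace isomorphism $\HH^{2g}_{\dR}(A/kF)\cong kF$ yields a perfect alternating $kF$-bilinear pairing $\langle\,,\,\rangle$ on $\HH^1_{\dR}(A/kF)$, equivariant for the Rosati (complex-conjugation) involution of $E$; hence $\langle\omega_j,\omega_k\rangle=0$ unless $k=c(j)$, and then $\langle\omega_j,\omega_{c(j)}\rangle\in(kF)^\times$ by perfectness. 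On the other hand, the Riemann bilinear relations identify $\langle\,,\,\rangle\otimes_{kF}\C$ with $(2\pi i)^{-1}$ times the topological pairing on $\HH^1(A(\C),\Q)$ attached to the same class --- the factor $2\pi i$ being the one by which the de Rham class of a divisor differs from its Betti class --- and computing that topological pairing in the eigenbasis gives $\langle\omega_j,\omega_{g+j}\rangle=\pm(2\pi i)^{-1}\,\varpi_j\varpi_{g+j}\,\chi_j(\xi)^{-1}$. Comparing the two expressions yields $\varpi_j\varpi_{g+j}=\pi i\,\chi_j(\xi)\,\mu_j$ for some $\mu_j\in(kF)^\times$ (a stray factor of $2$ is absorbed into $\mu_j$).

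Finally, I would use the isomorphism $E_0\otimes_\Q\C\xrightarrow{\ \sim\ }\C^g$, $\beta\mapsto(\chi_1(\beta),\dots,\chi_g(\beta))$, to choose $\beta\in E_0\otimes_\Q\C$ with $\chi_j(\beta)^2=\mu_j^{-1}$ for every $j\le g$, and set $\lambda:=\beta\lambda_0\in\HH_1(A,\C)$, which is nonzero. Since $\beta$ is fixed by the complex-conjugation involution, $\chi_{g+j}(\beta)=\bar\chi_j(\beta)=\chi_j(\beta)$, and therefore for $j=1,\dots,g$
\[ P(\lambda,\omega_j)\,P(\lambda,\omega_{c(j)})=\chi_j(\beta)\chi_{g+j}(\beta)\,\varpi_j\varpi_{g+j}=\chi_j(\beta)^2\,\pi i\,\chi_j(\xi)\,\mu_j=\pi i\,\chi_j(\xi), \]
as required. (Equivalently, one could keep $\lambda=\lambda_0$ and instead rescale each $\omega_{g+j}$ by $\mu_j^{-1}\in(kF)^\times$.) The step demanding the most care is the penultimate paragraph: arranging the $E$-equivariant de Rham pairing and the Riemann bilinear relations with fully consistent sign and normalisation conventions, and in particular pinning down the exact power of $2\pi i$ in the comparison isomorphism; once $\varpi_j\varpi_{g+j}\in\pi i\,\chi_j(\xi)\cdot(kF)^\times$ is established, the rest is linear algebra over the split algebra $E\otimes_\Q F$.
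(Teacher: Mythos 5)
Your argument is correct for the statement as written, and it shares its main ingredients with the paper's proof — the Riemann form $\operatorname{Tr}_{E/\Q}(\xi x\bar y)$ attached to $\xi$ (whose positivity is exactly the sign hypothesis on $\operatorname{Im}\chi_j(\xi)$), the $k$-rationality of the resulting polarization, the $E$-equivariance forcing the de Rham pairing to be anti-diagonal in the eigenbasis (this is \Cref{lemma: orthogonality differential forms}), and the factor $2\pi i$ coming from compatibility of the cycle class of $L$ under the Betti--de Rham comparison. Where you genuinely diverge is in how the constant is pinned down. You prove only the weaker intermediate statement $\varpi_j\varpi_{g+j}\in \pi i\,\chi_j(\xi)\cdot (kF)^\times$ and then exploit the existential freedom in the statement — $\lambda$ is allowed to lie in $\HH_1(A,\C)$, and the $\omega_{g+j}$ are unconstrained up to $(kF)^\times$ — to absorb the unknown unit $\mu_j$ by the substitution $\lambda=\beta\lambda_0$ with $\chi_j(\beta)^2=\mu_j^{-1}$ (the observation $\bar\chi_j(\beta)=\chi_j(\beta)$ for $\beta\in E_0\otimes\C$ is the right one, and a single $\beta$ does work for all $j$ simultaneously). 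The paper instead computes the constant exactly: it takes $\lambda$ to be a \emph{rational} generator of $\HH_1(A,\Q)$ as an $E$-module, builds a symplectic basis from trace-dual bases $\{\alpha_m\},\{\beta_m\}$ of $E_0$, normalizes $\omega_{c(j)}$ by the condition $\langle\omega_j^\vee,\omega_{c(j)}^\vee\rangle_\psi=\pm2$, and extracts the identity from the $(j,j)$-entry of $HJ^{-1}\,{}^t\Omega=2\pi i\,I$ together with $\sum_m\chi_j(\alpha_m\beta_m)=1$. What the explicit computation buys is precisely what your shortcut gives up: a version of the identity in which $\lambda$ is a known rational cycle and the normalization of $\omega_{c(j)}$ is effectively computable, which is what \Cref{section: computing periods in the hard case} and \Cref{rmk: rescaling} need in order to deduce quasi-periods from holomorphic periods. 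If you wanted your argument to serve that purpose, you would have to track the "stray" rational and sign factors you currently absorb into $\mu_j$ — the step you yourself flag as the delicate one — at which point you would essentially be redoing the paper's matrix computation.
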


The same result can be formulated in several alternative ways, some of which may be more amenable to computations. We provide one such translation in \Cref{section: computing periods in the hard case}. 
For the proof we will need the following simple lemma:
\begin{lemma}\label{lemma: orthogonality differential forms}
Let $A/k$ be a CM abelian variety with an action 
$\iota \colon E \hookrightarrow \End^0(A)$ of the CM field $E$.
Let $\psi$ be a polarization on $A$ and let 
$\langle \ , \ \rangle_\psi$ denote the induced alternating form on 
$\HH^1_{\dR}(A/k)$. Suppose that the Rosati involution induced by $\psi$ stabilises $\iota E$.
If $\omega_1,\, \omega_2 \in \HH^1_{\dR}(A/k)$ are eigenforms for the $E$-action with
characters $\chi_1, \chi_2 \colon E \to \overline{k}$ and $\chi_2 \neq \overline{\chi_1}$, then
$
  \langle \omega_1 , \omega_2\rangle_\psi = 0.
$
\end{lemma}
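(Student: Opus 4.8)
The plan is to exploit the compatibility of the polarization form with the $E$-action via the Rosati involution. Let me denote the Rosati involution by $\alpha \mapsto \alpha^\dagger$; by hypothesis it preserves $\iota E$, and on a CM field the Rosati involution restricts to complex conjugation, so $\iota(\alpha)^\dagger = \iota(\bar\alpha)$ for all $\alpha \in E$. The defining property of the Rosati involution with respect to the alternating form is the adjunction identity
\[
\langle \alpha^* \omega_1, \omega_2 \rangle_\psi = \langle \omega_1, (\alpha^\dagger)^* \omega_2 \rangle_\psi
\qquad \forall \alpha \in \End^0(A),
\]
where $\alpha^*$ denotes the induced action on $\HH^1_{\dR}(A/k)$. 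First I would record this identity, which is the de Rham incarnation of the compatibility between a polarization and its Rosati involution (it holds because the polarization form on $\HH^1$ is, up to the comparison isomorphisms, the one coming from the Weil pairing, for which adjunction with respect to Rosati is the definition).

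Next I would plug in an eigenform computation. Since $\omega_1$ has character $\chi_1$ and $\omega_2$ has character $\chi_2$, for $\alpha \in E$ we have $\alpha^* \omega_1 = \chi_1(\alpha)\,\omega_1$ and $(\iota(\bar\alpha))^* \omega_2 = \chi_2(\bar\alpha)\,\omega_2 = \overline{\chi_2}(\alpha)\,\omega_2$ (using that $\chi_2 \circ (\text{conjugation}) = \overline{\chi_2}$ as a homomorphism $E \to \overline{k}$). The adjunction identity then gives
\[
\chi_1(\alpha)\,\langle \omega_1, \omega_2 \rangle_\psi
= \langle \alpha^* \omega_1, \omega_2 \rangle_\psi
= \langle \omega_1, (\iota(\bar\alpha))^* \omega_2 \rangle_\psi
= \overline{\chi_2}(\alpha)\,\langle \omega_1, \omega_2 \rangle_\psi
\]
for every $\alpha \in E$. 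Hence $\bigl(\chi_1(\alpha) - \overline{\chi_2}(\alpha)\bigr)\langle \omega_1, \omega_2 \rangle_\psi = 0$ for all $\alpha \in E$. Since $\chi_1 \neq \overline{\chi_2}$ by hypothesis, there is some $\alpha \in E$ with $\chi_1(\alpha) \neq \overline{\chi_2}(\alpha)$, and we conclude $\langle \omega_1, \omega_2 \rangle_\psi = 0$.

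The only real subtlety — and the step I would be most careful about — is the first one: making precise that the Rosati adjunction identity transfers to the de Rham pairing $\langle\ ,\ \rangle_\psi$ as used in the paper, rather than only to the Betti/Weil pairing. This is standard (the comparison isomorphism between $\HH^1_B$ and $\HH^1_{\dR}$ is $E$-equivariant and matches the pairings up to the Tate twist, which is a scalar and does not affect vanishing), so I would simply cite the compatibility of polarizations with Rosati involutions on $\HH^1_{\dR}$, e.g. as in \cite{moonenIntro}, and not belabor it. Everything after that is the one-line eigenvalue argument above.
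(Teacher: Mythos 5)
Your proposal is correct and follows essentially the same argument as the paper: apply the Rosati adjunction for the polarization form, use that the Rosati involution restricts to complex conjugation on the CM field, and compare the two eigenvalues $\chi_1(\alpha)$ and $\chi_2(\bar\alpha)=\overline{\chi_2}(\alpha)$ to force the pairing to vanish. The paper states the adjunction identity directly without the extra care you take about transferring it to the de Rham realization, but the core computation is identical.
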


\begin{proof}
The Rosati involution $^{\dagger}$ on $\End^0(A)$ corresponding to $\psi$ restricts to (the unique) complex conjugation on $E$, that is, $\iota(e)^{\dagger} = \iota(\overline{e})$ for all $e \in E$. Hence
for all $e\in E$ we have
\[
  \chi_1(e)\,\langle \omega_1,\omega_2\rangle_\psi
   = \langle \iota(e)\omega_1 , \omega_2\rangle_\psi
   = \langle \omega_1, \iota(\overline{e})\omega_2\rangle_\psi
   = \chi_2(\overline{e}) \,\langle \omega_1, \omega_2\rangle_\psi.
\]
If $\langle \omega_1,\omega_2\rangle_\psi \neq 0$, this forces 
$\chi_1(e) = \chi_2(\overline{e})$ for all $e \in E$, hence $\chi_2=\overline{\chi_1}$ as claimed.
\end{proof}

\begin{proof}[Proof of \Cref{theorem: algebraic relation between period and quasiperiod}]
Part of the argument already appears in \cite{MR702188}, but for the reader's convenience, we prefer to give a mostly self-contained proof. The key ingredient is a comparison between different cohomology theories, so to make it more transparent it is useful to use the notation $\HH_{1}^\dR(A/k)$ for $\HH^1_{\dR}(A/k)^\vee$.

It is well-known that $\HH_1(A(\C),\Q)$ is a free $E$-module of rank 1 \cite[Proposition 3.6(c)]{MilneCM}. Fix an element $\lambda \in \HH_1(A(\C),\Q)$ that generates it as an $E$-module. %
We identify $E$ with $\HH_1(A(\C), \Q)$ via the map $\alpha \mapsto \alpha \cdot \lambda$.

Let $\alpha_1, \ldots, \alpha_g$ be a $\Q$-basis of $E_0$ and $\beta_1, \ldots, \beta_g$ be the dual basis with respect to the trace pairing $(\alpha, \beta) \mapsto \operatorname{tr}_{E_0/\Q}(\alpha\beta)$. Denote by $c$ the complex conjugation of $E$. It is easy to see that $\alpha_1, \ldots, \alpha_g, \frac{\beta_1}{c(\xi)}, \ldots, \frac{\beta_g}{c(\xi)}$ is a $\Q$-basis of $E$, hence 
\[
a_1 := \alpha_1 \cdot \lambda, \, \ldots, \, a_g := \alpha_g \cdot \lambda, \quad b_1 := \frac{\beta_1}{c(\xi)} \cdot \lambda, \, \ldots, \, b_g := \frac{\beta_g}{c(\xi)} \cdot \lambda
\]
is a $\Q$-basis of $\HH_1(A(\C), \Q)$. We also write $\lambda_i := \alpha_i \cdot \lambda$ for $i=1,\ldots,g$ and $\lambda_{i+g} := \beta_i \cdot \lambda$ for $i=1,\ldots,g$.
By \cite[Example 2.9]{MilneCM}, the assumptions on $\xi$ imply that the bilinear form $(\gamma \cdot \lambda, \gamma' \cdot \lambda) \mapsto \operatorname{tr}_{E/\Q}(\gamma \xi c(\gamma'))$ is a Riemann form on $\HH_1(A(\C),\Q)$, hence induces a polarization $\psi : A \to A^\vee$. Moreover, the Rosati involution with respect to this polarization stabilizes $E$.

Note now that any two polarizations $\psi_1,\, \psi_2 : A \to A^\vee$ satisfy $\psi_2^{-1} \circ \psi_1 \in \End^0(A)$. Since there is at least one polarization $A \to A^\vee$ defined over $k$, and all endomorphisms of $A$ are defined over $k$ by assumption, it follows that $\psi$ is defined over $k$.
Moreover, the definition of $\{\beta_j\}$ as the dual basis of $\{\alpha_j\}$ with respect to the trace pairing implies that -- in the basis $a_1, \ldots, a_g, b_1, \ldots, b_g$ -- the Riemann form is represented by 
\begin{equation}\label{eq: definition of matrix J}
    2J := \begin{pmatrix}
    0 & 2I \\ -2I & 0
    \end{pmatrix}.
\end{equation}

The polarization $\psi$ corresponds to an ample line bundle $L$ on $A$ defined over $k$, hence to a Chern class $c_1(L) \in \HH^2_{\dR}(A/k)(1)$, which we can see as an element of $\HH_{2, \dR}(A/k)^\vee (1) \cong \wedge^2 \HH_{1, \dR}(A/k)^\vee (1)$, that is, as an alternating bilinear form $\langle \cdot, \cdot \rangle_\psi$ on $\HH_{1, \dR}(A/k)$. 
Recall the function $c$ of \Cref{definition: index tuple for MT equation} and let $\omega_{c(1)},\, \ldots,\, \omega_{c(g)} \in \HH^1_{\dR}(A/kF)$ be eigenforms for the characters $\bar{\chi}_1, \,\ldots, \,\bar{\chi}_g$. These eigenforms are defined over $kF$, because the $E$-action is diagonalizable over $F$, which contains the images of all the characters $\chi_j,\, \bar{\chi}_j$. Let $\{\omega_i^\vee\}_{i=1,\ldots,2g}$ be the corresponding dual basis of $\HH_{1, \dR}(A/k)$.
 By \Cref{lemma: orthogonality differential forms}, the element $\omega_{c(i)}^\vee$ pairs nontrivially only with $\omega_i^\vee$ under $\langle \cdot, \cdot\rangle_\psi$. Since $\langle \cdot, \cdot\rangle_\psi$ and all the forms $\{\omega_j\}_{j=1,\ldots,2g}$ are $kF$-rational, we can rescale $\omega_{c(j)}$ by a suitable scalar in $F^\times$ to ensure 
 \[
 \langle \omega_j^\vee, \omega_{c(j)}^\vee\rangle_\psi = \begin{cases}
2, \text{ for }j=1,\ldots,g \\
-2, \text{ for }j=g+1,\ldots,2g.
\end{cases}
\]
In particular, the matrix representing the polarization with respect to the basis $\{\omega_i^\vee\}_{i=1,\ldots,2g}$ is again $2J$.
We further let
\[
\Omega = \left( \int_{\lambda_j} \omega_{i} \right)_{\substack{i=1,\ldots,g \\ j=1,\ldots,2g}} \quad \text{and} \quad H = \left( \int_{\lambda_j} \omega_{c(i)} \right)_{\substack{i=1,\ldots,g \\ j=1,\ldots,2g}}
\]
be the period and quasi-period matrices in our bases, and set $\Pi = \begin{pmatrix}
    \Omega \\ H
\end{pmatrix}$. 

Now consider the ample line bundle $L$ (corresponding to the polarization $\psi$) as a divisor on $A$ and apply the cycle class maps in singular and de Rham cohomology -- in the case of divisors, this is simply the first Chern class. Compatibility of the cycle class map in various cohomology theories (see \cite[p.~21]{Deligne}) implies that 
\[
\HH^2_B(A(\C), \Q)(1) \otimes_\Q \C \ni \operatorname{cl}_B(L) = \operatorname{cl}_{\dR}(L) \in \HH_{\dR}^2(A/kF)(1) \otimes_{kF} \C,
\]
where the equality is mediated by the comparison isomorphism 
\[
\HH^2_B(A(\C), \Q)(1) \otimes_\Q \C \cong \HH^2_{\dR}(A/kF)(1) \otimes_{kF} \C.
\]
In particular, if we denote by $f : \HH_1^B(A(\C),\Q) \otimes \C \to \HH_{1,\dR}(A/kF) \otimes_{kF} \C$ the singular-to-de Rham isomorphism, by $\langle \cdot, \cdot \rangle_{\psi, B}$ the Riemann form on $\HH_1^B(A(\C), \Q)$, and by $\langle \cdot, \cdot \rangle_{\psi, \dR}$ the bilinear form induced by $\psi$ on $\HH_{1,\dR}(A/kF)$, we have
\begin{equation}\label{eq: de Rham and Betti classes}
    2\pi i \langle x, y\rangle_{\psi, B} =
 \langle f(x), f(y)\rangle_{\psi, \dR}
\end{equation}
for all $x, y \in \HH_1(A(\C), \Q)$. 
We now write this equality in coordinates. A short calculation shows that for $j=1,\ldots,2g$ we have
\[
f(\lambda_j) = \sum_{i=1}^{2g} \Pi_{ij} \omega_i^\vee.
\]
Applying \Cref{eq: de Rham and Betti classes} to $x=\lambda_j, y=\lambda_k$, we obtain 
\[
\begin{aligned}
2\pi i (2J)_{j, k} & = 2\pi i \langle \lambda_j, \lambda_k \rangle_{\psi, B} =
\langle f(\lambda_j), f(\lambda_k) \rangle_{\psi, \dR} \\
& = \sum_{\ell, m} \Pi_{\ell, j} \Pi_{m, k} \langle \omega_\ell^\vee, \omega_m^\vee \rangle_{\psi, \dR} = \sum_{\ell, m} {}^t\Pi_{j,\ell} (2J)_{\ell, m} \Pi_{m, k} \\
& = ({}^t \Pi \cdot 2J \cdot \Pi)_{j, k}, 
\end{aligned}
\]
that is, $2\pi i J = {}^t \Pi \cdot J \cdot \Pi$.
Inverting both sides of this equality and carrying out the explicit matrix computation we obtain
\[
H J^{-1} \, {}^t \Omega= 2\pi i I.
\]
In particular, equality of the coefficients in position $(j, j)$ yields
\begin{equation}\label{eq: Bertrand conclusion}
\sum_{m=1}^g \left(H_{j, c(m)} \Omega_{j,m} - H_{j, m} \Omega_{j, c(m)}\right) = 2\pi i.
\end{equation}
Note that the change of variables formula implies 
\[
\Omega_{j, m} = \int_{\lambda_m} \omega_j = \int_{\alpha_m \cdot \lambda} \omega_j = \int_\lambda \alpha_m^* (\omega_j) = \chi_j(\alpha_m) \int_\lambda \omega_j
\]
for all $1 \leq i,j \leq g$, and similarly 
\[
\Omega_{j, c(m)} %
= \chi_j\left(\frac{\beta_m}{c(\xi)}\right) \int_{\lambda} \omega_j, \quad H_{j, m} = \bar{\chi}_j(\alpha_m) \int_\lambda \omega_{c(j)}, \quad H_{j, c(m)} = \bar{\chi}_j\left( \frac{\beta_m}{c(\xi)} \right) \int_\lambda \omega_{c(j)}.
\]
Replacing these equalities into \Cref{eq: Bertrand conclusion}, we obtain
\begin{equation}
\label{eq: Bertand factors to be modified}
    \int_\lambda \omega_j \cdot \int_\lambda \omega_{c(j)} \cdot \sum_{m=1}^g \left( \bar{\chi}_j\left( \frac{\beta_m}{c(\xi)} \right)\chi_j(\alpha_m) - \bar{\chi}_j(\alpha_m)\chi_j\left( \frac{\beta_m}{c(\xi)}\right)  \right) = 2 \pi i.
\end{equation}
Observe that $\bar{\chi}_j(\alpha_m)=\chi_j(\bar{\alpha}_m) = \chi_j(\alpha_m)$ and similarly $\bar{\chi}_j(\beta_m) = \chi_j(\beta_m)$ since $\alpha_m, \beta_m$ lie in the totally real subfield $E_0$. The formula then simplifies to
\[
\int_\lambda \omega_j \cdot \int_\lambda \omega_{c(j)} \cdot \sum_{m=1}^g \left( \chi_j\left( \frac{\beta_m}{\xi} \right)\chi_j(\alpha_m) + \chi_j(\alpha_m)\chi_j\left( \frac{\beta_m}{\xi}\right)  \right) = 2 \pi i,
\]
or equivalently
\[
\int_\lambda \omega_j \cdot \int_\lambda \omega_{c(j)} \cdot \sum_{m=1}^g 2\chi_j(\alpha_m \beta_m)  = 2 \pi i \chi_j(\xi).
\]
Finally, we use the fact that $\sum_{m=1}^g \chi_j(\alpha_m\beta_m) = 1$ for all $j$: to see this, note that $\{\chi_1|_{E_0},\ldots,\chi_g|_{E_0}\}$ is the set of all embeddings of $E_0$ in $\C$ and form the matrices
\[
M = \left( \chi_i(\alpha_j) \right)_{i,j =1,\ldots,g} \quad \text{and} \quad N = \left( \chi_i(\beta_j) \right).
\]
The condition that $\{\beta_j\}_{j=1,\ldots,g}$ is the dual basis of $\{\alpha_j\}_{j=1,\ldots,g}$ with respect to the trace pairing yields ${}^t M \cdot N=I$. Hence we also have $N \cdot {}^t M = I$, which gives precisely $\sum_{m=1}^g \chi_j(\alpha_m\beta_m) = 1$ for all $j$. In conclusion, we obtain
\[
\int_\lambda \omega_j \cdot \int_\lambda \omega_{c(j)} = \pi i \chi_j(\xi),
\]
as desired.
\end{proof}

\subsection{Computational considerations}
\label{section: computing periods in the hard case}
The proof of \Cref{theorem: algebraic relation between period and quasiperiod} also gives the following more computationally-friendly result. Let $\lambda_1,\, \ldots,\, \lambda_{2g}$ be a basis of $\HH_{1}(A, \Q)$, symplectic with respect to a fixed polarization. Fix a basis $\omega_1,\, \ldots, \,\omega_{2g}$ of $\HH^1_{\dR}(A/kF)$ consisting of eigenforms, with corresponding characters $\chi_1,\, \ldots,\, \chi_{2g} : E \to \bar{k}^\times$. Suppose that $\omega_1,\, \ldots,\, \omega_g$ are holomorphic. Up to a normalization, we can assume that $\langle \omega_a^\vee, \omega_b^\vee\rangle_{\dR} = J_{ab}$, where $J$ is as in \Cref{eq: definition of matrix J}. Note that this normalization introduces a rescaling that is not immediate to compute: we describe how to circumvent this problem in \Cref{rmk: rescaling}.

Since $\HH_{1}(A, \Q)$ is a free $E$-module of rank $1$, there exist $e_1, \ldots, e_{2g} \in E$ such that $\lambda_j = e_j \lambda_1$. 

\begin{remark}
Suppose $\lambda_1,\, \ldots,\, \lambda_{2g}$ is the basis used for the matrices $A_\alpha$. Finding $e_j$ amounts to some straightforward linear algebra: if we identify $E$ with the $\Z$-linear span of the matrices $A_\alpha$, the element $e_j$ is simply the unique matrix in this span whose first column is the $j$-th vector of the standard basis of $\Q^{2g}$.
\end{remark}

Applying the argument in the proof of \Cref{theorem: algebraic relation between period and quasiperiod} to the chosen polarization, and replacing $\alpha_m$ with $e_m$ for $m=1, \dots, g$, and $\beta_m/c(\xi)$ with $e_{m+g}$ for $m=1, \dots, g$, we get
\begin{equation}
    \int_\lambda \omega_j \cdot \int_\lambda \omega_{c(j)} \cdot \sum_{m=1}^g \left( \bar{\chi}_j\left( e_{m+g} \right)\chi_j(e_m) - \bar{\chi}_j(e_m)\chi_j\left( e_{m+g}\right)  \right) = 2 \pi i.
\end{equation}
in lieu of \Cref{eq: Bertand factors to be modified}.
In particular, if $\int_{\lambda_1}\omega_j$ is known, then so is $\int_{\lambda_1} \omega_{j+g}$.
This is especially useful in practice: the algorithms currently implemented in \textsc{Magma} to compute period matrices of curves only return the integrals of a basis of holomorphic forms along $\lambda_1,\, \ldots,\, \lambda_{2g}$, whereas our application requires access also to periods of differentials of the second kind.

\begin{remark}\label{rmk: rescaling} 
We explain how to compute the normalization needed to ensure that the eigenbasis $\{\omega_j\}$ is symplectic. We assume that we have access to a basis $\{u_i\}$ of $\HH^1_{\dR}(A/k)$ which is symplectic for the fixed polarization, and that we can compute the action of $\iota E$ on $\HH^0(A, \Omega^1)$ with respect to this basis. Given the available algorithms, this assumption holds for Jacobians.

Note first that the data $\{u_j\},\, \{M_\alpha\}$ is clearly sufficient to compute an $E$-eigenbasis of $\HH^0(A_{kF}, \Omega^1)$. 
Next, we discuss the action of $\alpha \in E$ on $\HH^1_{\dR}(A/k)$.
Write $\alpha^* u_j = \sum_{i=1}^{2g} {d_{ij}} u_i$. We compute, for $1 \leq k \leq g$ and $g+1 \leq j \leq 2g$,
    \[
    \langle u_k, \alpha^* u_j \rangle = \sum_{i=1}^{2g} {d_{ij}} \langle u_k, u_i \rangle %
    = {d_{k+g, j} \, \langle u_k, u_{k+g}\rangle}
    \]
and
    \[
    \langle u_k, \alpha^* u_j \rangle = \langle (\alpha^\dagger)^*u_k, u_j \rangle = \sum_{m=1}^g (M_{\bar{\alpha}})_{mk} \langle u_m, u_j \rangle = (M_{\bar{\alpha}})_{j-g, k} \langle u_{j-g}, u_j\rangle.
    \]
Since $\langle u_k, u_{k+g}\rangle = \langle u_{j-g}, u_j\rangle$ for all $j,\, k$ in our range, comparing these formulas we obtain that the action of $\alpha$ on $\HH^1_\dR(X/k)$ is represented (with respect to the basis $\{u_i\}_i$) by a matrix of the form
    \[\begin{pmatrix}
        M_\alpha & Q \\
        0 & {}^tM_{\bar{\alpha}}
    \end{pmatrix}\] 
where $Q$ is some unknown $g \times g$ matrix.
    
Suppose now that $\omega_{c(j)} = \sum_{k=1}^{2g} f_{jk} u_k \in \HH^1_{\dR}(A/kF)$ is an eigenform with system of eigenvalues $\bar{\chi}_j$. A direct matrix computation shows that for every $\alpha \in E$ the vector ${}^{t}(f_{j, g+1}, \ldots, f_{j, 2g})$ is an eigenvector of ${}^tM_{\bar\alpha}$ of eigenvalue $\bar{\chi}_j(\alpha)$, and can therefore be computed (up to scalars) just from the knowledge of the matrices ${}^tM_{\bar{\alpha}}$, which we have access to.
Finally, observe that in order to apply the computational strategy described above we need to modify $\omega_{c(1)}, \ldots, \omega_{c(g)}$ so that $\omega_1, \ldots, \omega_{2g}$ is a symplectic basis. To do this, it suffices to rescale $\omega_{c(j)}$ by the inverse of $\langle \omega_j, \omega_{c(j)} \rangle$ (all other pairings are trivial, see \Cref{lemma: orthogonality differential forms}). Thus, it is enough to compute the pairing $\langle \omega_j, \omega_{c(j)} \rangle$ for every $j=1,\ldots,g$.
As $\HH^0(A, \Omega^1)$ is a totally isotropic subspace with respect to any polarization, we obtain
    \[
    \langle \omega_j, \omega_{c(j)}\rangle = \langle \omega_j, \sum_{k=1}^{2g} f_{jk} u_k \rangle = \langle \omega_j, \sum_{k=g+1}^{2g} f_{jk} u_k \rangle.
    \]
Since we have explicit expressions for the $\omega_j$ in terms of the symplectic basis $\{u_h\}$ and we know all the coefficients $f_{jk}$ for $k=g+1, \ldots, 2g$, the previous formula determines all the nontrivial pairings $\langle \omega_j, \omega_{c(j)}\rangle$, without the need to determine the coordinates $f_{j, 1},\, \dots, f_{j, g}$.
\end{remark}

\section{Examples}
\label{section: examples}
In this final section, we apply the preceding theory to compute the connected monodromy fields of two Jacobians.
Note that it is generally difficult to produce explicit examples in this setting. Suppose we look for a curve $X/k$ whose Jacobian $J$ is degenerate and has complex multiplication. Degeneracy can only occur when $\dim J = g(X) \geq 4$ \cite{MR608640}, and Coleman \cite[Conjecture~6]{Coleman} conjectured that, up to complex isomorphism, there exist only finitely many CM Jacobians of any fixed genus $g \geq 4$. While the conjecture is now known to fail for small genus ($g \leq 7$, see \cite{MR1085259, MR2510071}), it remains open in higher dimensions, and in any case CM Jacobians are relatively rare and hard to find. By contrast, constructing CM abelian varieties is comparatively straightforward, particularly if one allows the base field to vary. However, a generic CM abelian variety does not yield a degenerate example. Thus, although our method applies in principle to any CM abelian variety, the supply of genuinely interesting test cases among Jacobians is limited.

We now turn to our examples, referring to \cite{myrepo2025} for the supporting code.

\begin{example}[Fermat curve]
\label{example: Fermat Jacobian}
    Let $J/\Q$ be the Jacobian of the hyperelliptic curve $y^2=x^{15}+1$. {There is an action of the $15$-th roots of unity on $J$ that induces an embedding $\iota : \Q(\zeta_{15}) \hookrightarrow \End^0(J_{\bar{\Q}})$, so $J$ has CM.} The basis of hyperelliptic differentials $\omega_i = x^{i-1}\, dx/y \in \HH_{\dR}^1(J/\Q)$ diagonalizes the endomorphism action. The CM type of $J$ and a family $\mathcal{F}$ of equations for $\MT(J)$ are described in \cite[Example 4.2.9]{gallese_part1}.
    The endomorphism field $\Q(\End J)$ is $\Q(\zeta_{15})$.
    By \Cref{remark: if k is Q then Kconn is generated by periods}, \Cref{theorem: computation of kconn} simplifies, and $k(\varepsilon_J)$ is generated by the periods $P(\sigma, \omega_f)$.
    Of the nine equations in $\mathcal{F}$, only $f = x^{10}x^{12}/x^{8}x^{14}$ is expected to give a period $P(\sigma, \omega_f)$ not in $\Q(\zeta_{15})$.
    We compute the integrals of the differential forms $\omega_f$ via the method explained in \Cref{section: computing periods}. We get
    \[ P(\sigma, \omega_f) = 0.82990 \; 95361 \;82568... \]
    We need 500 digits of precision to recognize $P(\sigma, \omega_f)$ as an algebraic number and a generator for the number field {\texttt{\cite[\href{https://www.lmfdb.org/NumberField/16.0.3243658447265625.1}{number field 16.0.3243658447265625.1}]{lmfdb}}},
    confirming the results of \cite{gallese_part1}.%
\end{example}

\begin{example}[CM Jacobian of Mumford type]\label{ex: Mumford I}
Mumford \cite{MR248146} famously constructed examples of degenerate abelian fourfolds, parametrized by certain Shimura curves. Until recently, however, no explicit Jacobian of Mumford type was known. The situation has now changed with the work of Bouchet, Hanselman, Pieper, and Schiavone \cite{bouchet2025}, who computed the universal curve over some of Mumford's Shimura curves via interpolation through CM points in these moduli spaces. As a consequence of their work, we now have explicit examples of Mumford Jacobians without extra endomorphisms, as well as degenerate CM Jacobians in genus~4. In this example we analyze one such CM point on the Shimura curve arising from the arithmetic (2,3,7) group. This is the Jacobian $J$ of a hyperelliptic curve $X$ over the number field $k=\Q(\zeta_7)^+$. 
Letting $\mu = \zeta_7+\bar{\zeta_7}$, the curve $X$ is the fiber of the family $C_{7,t}$ corresponding to
\[ t = \frac{350588211676416\mu^2 - 1415944436534208\mu + 1160374047771995}{{601617706932059}}. \]
The defining equation of $X$ is too large to reproduce conveniently here, but is available in the accompanying GitHub repository \cite{myrepo2025}. %

    The endomorphism algebra of $J$ is isomorphic to the degree-8 CM field $E$ described in \texttt{\cite[\href{https://www.lmfdb.org/NumberField/8.0.7834003547041.1}{8.0.7834003547041.1}]{lmfdb}}.
    The Galois closure of $E/\Q$ is a field $F/\Q$ of degree 24.
    We verify that $J$ is simple via \cite[Chapter 1, Theorem 3.6]{Lang}.
    The reflex field $E^\ast$ has degree $2$ over $k$; it is the only number field of degree 6 that arises as a reflex field of $E$ (for varying CM type).
    From the simplicity of $J$, it follows that $k(\End J) = E^\ast$ \cite[Chapter 3, Theorem 1.1]{Lang}.

    The computation in this case is more involved than in \Cref{example: Fermat Jacobian}, for two reasons: $E/\Q$ is not Galois, and the hyperelliptic basis is not an eigenbasis for the endomorphism action.     
    We compute a symplectic eigenbasis of $\HH^1(X/kF)$ as in \Cref{section: computing periods in the hard case}, see especially \Cref{rmk: rescaling}.
    The Mumford-Tate group is defined by $4$ equations, $3$ of which come from the endomorphisms and the polarization, and an extra one in degree 2 corresponding to a cycle not generated by divisors.
    For each $f \in \mathcal{F}$, we compute the period $P(\sigma, \omega_f)$ by the method explained in \Cref{section: computing periods in the hard case}. We find that all these periods are in $F$.
    The subgroup of $\tau \in \Gal(F/k)$ such that $\tau P(\sigma, \omega_f)=P(\sigma,\omega_{\tau f})$ fixes the subfield $k(\varepsilon_J)=E^\ast$.
    In particular, although $J$ is degenerate, the equality $k(\varepsilon_J)=k(\End J)$ holds.
\end{example}

\begin{example}\label{ex: Mumford II}
    We consider a second CM Jacobian $J$ of Mumford type arising from the work of Bouchet, Hanselman, Pieper, and Schiavone \cite{bouchet2025}. This Jacobian is defined over $k=\Q(\zeta_9)^+$ and corresponds to a non-hyperelliptic curve $X/k$ of genus~4. The defining equations of $X$ are again too large to reproduce here, but they are available at \cite{myrepo2025}.
    Performing the computation as in the previous example, we find that the $k(\varepsilon_J)=k(\End J) = E^\ast$ is the field with label \texttt{\cite[\href{https://www.lmfdb.org/NumberField/6.0.465831.1}{6.0.465831.1}]{lmfdb}}.
\end{example}

\begin{example}
    Consider the curves
    \[ X\colon y^7=x(1-x), \qquad E\colon y^2+xy = x^3-x^2-2x-1. \]
    Denote by $J$ the Jacobian of $X$.
    For a non-zero square-free integer $d \neq 1$, let $E^{(d)}$ be the twist of $E$ by the non-trivial character of $\Q(\sqrt{d})$. Consider the abelian variety $A^{(d)} = J \times E^{(d)}$.
    Silverberg and Zarhin show in {\cite[Example 4.2]{MR1630512}} that
    \[
    \Q(\End(A)) = \Q(\End(J)) = \Q(\zeta_7),
    \qquad
    \Q(\varepsilon_A) = \Q(\zeta_7, \sqrt{d}).
    \]
    We have verified this result numerically for several values of $d$.  
    For instance, when $d=-1$, we obtain the nontrivial period
    \[ 1.36946\; 08463 \; 40899 \; \dots \]
    which we recognize as the square root of
    \[ (-1) \cdot  \left(\frac{12\zeta_7^5 + 24\zeta_7^4 + 36\zeta_7^3 + 48\zeta_7^2 + 60\zeta_7 + 
    30}{7}\right)^2. \]
\end{example}

\bibliographystyle{alpha}
\bibliography{biblio.bib}
\end{document}